\declaretheorem[name=Theorem, refname={Theorem,Theorems}, Refname={Theorem, Theorems}]{theorem}
\declaretheorem[name=Lemma, refname={Lemma, Lemmas}, Refname={Lemma, Lemmas}, sibling=theorem]{lemma}
\declaretheorem[name=Conjecture, refname={Conjecture, Conjectures}, Refname={Conjecture, Conjectures}, sibling=theorem]{conjecture}
\declaretheorem[name=Corollary, refname={Corollary, Corollaries}, Refname={Corollary, Corollaries}, sibling=theorem]{corollary}
\declaretheorem[name=Proposition, refname={Proposition, Propositions}, Refname={Proposition, Propositions}, sibling=theorem]{proposition}
\declaretheorem[name=Claim, refname={Claim, Claims}, Refname={Claim, Claims}]{claim}
\newtheorem*{theorem*}{Theorem}
\DeclarePairedDelimiter\set{\{}{\}}
\DeclarePairedDelimiter\ceil{\lceil}{\rceil}
\DeclarePairedDelimiter\floor{\lfloor}{\rfloor}
\newcommand{\norm}[1]{{\left|#1\right|}}
\newcommand{\calC}{\mathcal{C}}
\newcommand{\Cc}{\calC}
\newcommand{\calD}{\mathcal{D}}
\newcommand{\Dd}{\calD}
\newcommand{\calF}{\mathcal{F}}
\newcommand{\calX}{\mathcal{X}}
\newcommand{\NN}{\mathbb{N}}
\newcommand{\RR}{\mathbb{R}}
\newcommand{\N}{\NN}
\newcommand{\FO}{\mathsf{FO}}
\newcommand{\Oh}{\mathcal{O}}
\DeclareMathOperator\wcol{wcol}
\DeclareMathOperator\dist{dist}
\DeclareMathOperator\WReach{WReach}
\DeclareMathOperator\profile{profile}
\def\cqedsymbol{\ifmmode$\Diamond$\else{\unskip\nobreak\hfil
\penalty50\hskip1em\null\nobreak\hfil$\Diamond$
\parfillskip=0pt\finalhyphendemerits=0\endgraf}\fi}
\newcommand{\cqed}{\renewcommand{\qed}{\cqedsymbol}}
\let\le\leqslant
\let\ge\geqslant
\let\leq\leqslant
\let\geq\geqslant
\let\epsilon\varepsilon
\let\epsi\varepsilon
\let\eps\varepsilon
\let\phi\varphi
\renewcommand{\leq}{\leqslant}
\renewcommand{\geq}{\geqslant}
\renewcommand{\le}{\leqslant}
\renewcommand{\ge}{\geqslant}
 \renewenvironment{enumerate}{\begin{enumorig}[label=\textup{(\roman*)}, noitemsep, topsep=2pt plus 2pt, labelindent=.2em, leftmargin=*, widest=iii]}{\end{enumorig}}
\let\old@setaddresses\@setaddresses
\def\@setaddresses{\bigskip\bgroup\parindent 0pt\let\scshape\relax\old@setaddresses\egroup}
\newcommand{\GrantThanks}{\thanks{
\begin{minipage}{.67\textwidth}The work of Micha\l{} Pilipczuk is supported by the project {\sc{Total}} that has received funding from the European Research Council (ERC) 
under the European Union's Horizon 2020 research and innovation programme (grant agreement No. 677651).
\end{minipage}\hfill\begin{minipage}{.25\textwidth}\includegraphics[width=\textwidth]{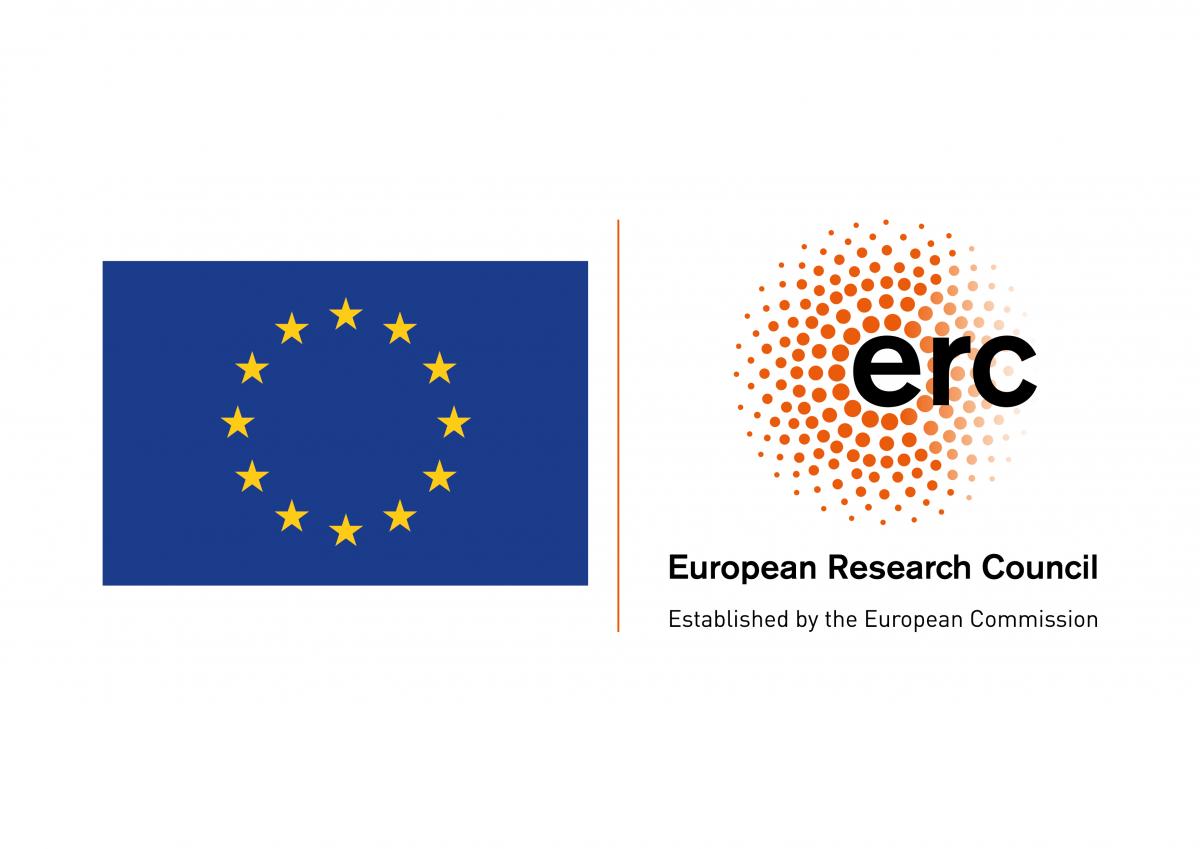}\end{minipage}\hfill}}
\begin{document}
\title{Erd\H{o}s-Hajnal properties for powers of sparse graphs}
\GrantThanks

\author[M.~Briański]{Marcin Briański}
\address[M.~Briański,\ P.~Micek,\ M.T.~Seweryn]{Theoretical Computer Science Department, 
Faculty of Mathematics and Computer Science, Jagiellonian University, Krak\'ow, Poland}
\email{marcin.brianski@student.uj.edu.pl}

\author[P.~Micek]{Piotr Micek}
\email{piotr.micek@tcs.uj.edu.pl}

\author[Mi.~Pilipczuk]{Michał Pilipczuk}
\address[M.~Pilipczuk]{Institute of Informatics, University of Warsaw, Poland}
\email{michal.pilipczuk@mimuw.edu.pl}

\author[M.T.~Seweryn]{Michał T.\ Seweryn}
\email{michal.seweryn@tcs.uj.edu.pl}

\date{\today}



\begin{abstract} 
We prove that for every nowhere dense class of graphs $\Cc$, positive integer $d$, 
and $\eps>0$, the following holds: 
in every $n$-vertex graph $G$ from $\Cc$ one can find two disjoint vertex subsets $A,B\subseteq V(G)$ such that 
\begin{itemize}
 \item  $|A|\geq (1/2-\eps)\cdot n$ and $|B|=\Omega(n^{1-\eps})$; and
 \item either $\dist(a,b)\leq d$ for all $a\in A$ and $b\in B$, or $\dist(a,b)>d$ for all $a\in A$ and $b\in B$.
\end{itemize}
We also show some stronger variants of this statement, including a generalization to the setting of First-Order interpretations of nowhere dense graph classes. 
\end{abstract}

\maketitle

\section{Introduction}

\subsection*{Sparsity and density.}
The theory of structural sparsity in graphs revolves around two notions introduced by Ne\v{s}et\v{r}il and Ossona de Mendez in~\cite{NesetrilM08a,NesetrilM08b,NesetrilM11}: {\em{bounded expansion}} and {\em{nowhere denseness}}.
We say that a class of graphs $\Cc$ is {\em{nowhere dense}} if for every $d\in \N$, there is an upper bound $t(d)\in \N$ on the sizes of cliques that can be found as depth-$d$ minors of graphs from $\Cc$. Here,
$H$ is a {\em{depth-$d$ minor}} of $G$ if $H$ can be obtained from a subgraph of $G$ by contracting mutually disjoint connected subgraphs of radius at most $d$.
More restrictively, $\Cc$ has {\em{bounded expansion}} if for every $d\in \N$ there is an upper bound $c(d)\in \N$ on the average degree of depth-$d$ minors of graphs from $\Cc$.
The concept of bounded expansion encompasses a vast majority of well-studied classes of sparse graphs.
Examples include proper minor-closed classes, classes with bounded maximum degree,
graphs that can be drawn in the plane with a bounded number of crossings per edge~\cite{NOdMW12}, and
intersection graphs of bounded-ply families of fat objects in Euclidean spaces~\cite{Har-PeledQ17}. 

While the definitions of nowhere denseness and of bounded expansion may seem arbitrary at first glance, the last decade has witnessed a prodigious development of various structural techniques for them.
Apart from describing many interesting combinatorial properties, these techniques have strong applications in algorithm design and uncover deep links with logical aspects of sparsity.
In particular, it has been shown~\cite{DKT12,GroheKS17} that for subgraph-closed classes of graphs, nowhere denseness exactly%
\footnote{
Assuming that $\FO$ model-checking cannot be solved in FPT time on all graphs
(which is implied by
the widely believed conjecture that $\mathsf{FPT} \neq \mathsf{W}[1]$,
or even $\mathsf{AW}[\star]\neq \mathsf{FPT}$).
}
delimits classes where every property expressible in the First-Order logic $\FO$
can be tested in almost linear time.
This shows that nowhere denseness corresponds to an important dividing line in descriptive complexity theory.
We refer to the monograph of Ne\v{s}et\v{r}il and Ossona de Mendez~\cite{sparsity} and to more recent lecture notes~\cite{notes} for a broader introduction to the subject.

The techniques of structural sparsity can be applied not only to describe structure in sparse graphs, but also to analyze dense graphs that can be derived from sparse ones using simple combinatorial transformations.
Take, for instance, the concept of the power of a graph: for a graph $G$ and an integer $d\in \N$, the {\em{$d$th power}} $G^d$ is the graph on the same vertex set as $G$, where two vertices $u$ and $v$ are considered adjacent 
if and only if they are at distance at most $d$ in $G$. Taking a power can turn a sparse graph into a dense one: e.g.\ the second power of a star is a clique.
Thus, if $\Cc$ is a class of sparse graphs, then the class $\Cc^d$ of the $d$th powers of graphs in $\Cc$ may contain dense graphs, but the intuition is that graphs from $\Cc^d$ should admit strong structural properties due to admitting
sparse pre-images. This work fits into the broader direction of trying to describe and quantify these properties.

Kwon et al.~\cite{KPS20} proved a useful structural theorem for powers of classes of bounded expansion, namely they admit {{low shrubdepth colorings}}\footnote{In~\cite{KPS20}, Kwon et al. discuss 
a weaker notion of {\em{low rankwidth colorings}} and only mention that the colorings they obtain for powers of classes of bounded expansion are actually low shrubdepth colorings. This has been later clarified and generalized
by Gajarsk\'y et al. in~\cite{GajarskyKNMPST18}.}.
This mirrors the existence of so-called {\em{low treedepth colorings}} in classes of bounded expansion, see~\cite{NesetrilM08a}.

Shrubdepth was introduced by Ganian et al.~\cite{GanianHNOM19} and, informally, a graph has small shrubdepth if its adjacency relation can be concisely described in a tree of bounded depth, called the {connection model}.
A \emph{connection model} for a graph \(G\) with a label set \(\Lambda\) is a tuple \((\lambda, T, \set{Z_x \colon x \in V(T) \setminus V(G)})\) consisting of:
\begin{enumerate}
\item a labelling \(\lambda\) of vertices of \(G\) with labels from \(\Lambda\),
\item a rooted tree \(T\) whose leaves are the vertices of \(G\), and
\item for every non-leaf node $x$ of $T$, a symmetric binary relation $Z_x\subseteq \Lambda^2$,
\end{enumerate}
such that the following condition holds. For every pair of vertices \(u, v\) in $G$, we have
\[uv \in E(G) \Leftrightarrow (\lambda(u), \lambda(v)) \in Z_x,\quad
\textrm{where \(x\) is the lowest common ancestor of \(u\) and \(v\) in \(T\).}\]
The \emph{shrubdepth}\footnote{The original definition proposed by Ganian et al. in~\cite{GanianHNOM19} is slightly different and applies only to classes of graphs, and not single graphs. The variant defined here leads to the same notion of having low shrubdepth colorings.} of $G$ is the lowest number $s$ such that $G$ has a connection model of depth at most~$s$ over a label set of size $s$.
A class $\Cc$ admits \emph{low shrubdepth colorings} if for every $p\in \N$ there exist numbers $f(p)$ and $s(p)$ such that every graph $H$ in $\Cc$ has a vertex coloring with at most $f(p)$ colors in which 
every subset of $p$ colors induces a subgraph of shrubdepth at most $s(p)$.

The result of Kwon et al.~\cite{KPS20} was later generalized by Gajarsk\'y et al.~\cite{GajarskyKNMPST18} to arbitrary one-dimensional $\FO$ interpretations of classes of bounded expansion, 
where the binary predicate ``$\dist(u,v)\leq d$'' that is used in the definition of the $d$th power can be replaced with any property of a pair of vertices that can be expressed in $\FO$.
This encompasses a broad range of graph transformations definable in logic.

The existence of low shrubdepth colorings with a constant number colors entails a number of strong structural properties.
For instance, as observed in~\cite{KPS20}, it implies that the class in question is $\chi$-bounded.
Mimicking the situation for low treedepth colorings, one should expect an analogue of these results for powers, or even $\FO$ interpretations, of nowhere dense classes. 
Therefore, the following natural conjecture, posed in
\cite{KPS20}, is the goal
which sets the motivation for our work.

\begin{conjecture}\label{conj:lsc}
Let $\Cc$ be a nowhere dense class of graphs, $d$ be a positive integer, $\eps$ be a positive real. 
Then there is a function $f\colon \N\to \N$ such that for every natural $p$ and 
every $n$-vertex graph $G$ in $\Cc^d$, there is a coloring of $G$ using $\Oh(n^{\eps})$ colors in which every subset of $p$ colors induces a subgraph of $G$ of shrubdepth at most $f(p)$.
\end{conjecture}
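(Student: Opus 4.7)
Given $H \in \Cc$ and $G \coloneqq H^d$, the first observation to exploit is a distance-closure property. For any $U \subseteq V(H)$, let $\tilde U \coloneqq N_H^{\lfloor d/2 \rfloor}[U]$. For every $u,v \in U$ with $\dist_H(u,v) = \ell \le d$ and every shortest $u$-$v$ path $P$, each internal vertex of $P$ is at distance at most $\lfloor \ell/2 \rfloor \le \lfloor d/2 \rfloor$ from $\{u,v\} \subseteq U$, so $P$ lies in $\tilde U$. Thus $\dist_{H[\tilde U]}(u,v) = \dist_H(u,v)$ whenever the latter is at most $d$, and $G[U]$ is an induced subgraph of $H[\tilde U]^d$. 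It therefore suffices to colour $V(H)$ with $\Oh(n^{\eps})$ colours so that for every $p$ colour classes with union $U$, the graph $H[\tilde U]$ has treedepth at most some $t(p,d)$; the shrubdepth bound follows automatically, since treedepth $t$ implies shrubdepth $t$ and the $d$-th power is a one-dimensional FO interpretation, so by the preservation of bounded shrubdepth under FO transductions (Ganian et al.) we obtain shrubdepth at most some $f(p) \coloneqq s(t(p,d),d)$ on $G[U]$.

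\textbf{Building the coloring.} To produce the coloring above, the plan is to apply the low-treedepth colouring theorem of Nešetřil--Ossona de Mendez for nowhere dense classes not to $H$ itself but to a suitable \emph{$d$-augmentation} $\widehat H$ of $H$. Here $\widehat H$ is a supergraph on $V(H)$ that adds edges encoding short $H$-paths, designed so that a subset of low treedepth in $\widehat H$ also has low treedepth in $H$ once inflated to its $\lfloor d/2 \rfloor$-neighbourhood. Two candidate constructions are the transitive-fraternal augmentations of Nešetřil--Ossona de Mendez iterated $\lceil \log d \rceil$ times, and augmentations driven by a vertex ordering witnessing $\wcol_{2d}(H) = \Oh(n^{\eps})$, which exists because $\Cc$ is nowhere dense. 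In either case $\widehat H$ should remain nowhere dense with parameters depending only on $\Cc$ and $d$, so the low-treedepth colouring theorem applies to $\widehat H$ and yields a colouring of $V(H)$ with $\Oh(n^{\eps})$ colours in which every $p$ colour classes induce, in $\widehat H$, a subgraph of treedepth bounded in terms of $p$. Careful bookkeeping should translate this into the treedepth bound on $H[\tilde U]$ needed in the first paragraph.

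\textbf{Main obstacle.} The decisive difficulty is making the second paragraph rigorous: exhibiting $\widehat H$ such that a colouring with low treedepth on $\widehat H[U]$ certifies low treedepth on $H[N_H^{\lfloor d/2 \rfloor}[U]]$, while the family $\{\widehat H : H \in \Cc\}$ remains nowhere dense with the right quantitative behaviour. In the bounded-expansion regime (the setting of Kwon et al.) augmentations are of uniformly bounded size and this step goes through smoothly; in the nowhere-dense regime, each augmentation level costs a multiplicative $n^{\eps}$-factor that must be absorbed into the final budget, and relating the treedepth of an augmented graph to that of a $d$-neighbourhood closure in $H$ is genuinely delicate. A robust alternative would be to bypass augmentations and instead build the colouring directly from sparse neighbourhood covers \emph{with bounded-treedepth parts}; polynomial-size sparse $d$-covers are known for nowhere dense classes, but arranging for the parts to have bounded treedepth rather than merely bounded radius appears to be the true bottleneck of the whole program.
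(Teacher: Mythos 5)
The statement you are attempting to prove is Conjecture~\ref{conj:lsc}, not a theorem of the paper: the authors explicitly write that ``the arguments provided in~\cite{KPS20,GajarskyKNMPST18} do not generalize to the nowhere dense setting and Conjecture~\ref{conj:lsc} remains open.'' There is therefore no proof in the paper to compare against. The paper's actual contribution, Theorem~\ref{thm:main}, is a strictly weaker consequence of the conjecture (an ``almost'' strong Erd\H{o}s--Hajnal property for $\Cc^d$), established by an entirely different route --- weak coloring numbers, a $d$-separator lemma (Lemma~\ref{lem:new-abs}), and neighbourhood-complexity bounds --- precisely because the low-shrubdepth-colouring statement itself could not be obtained.

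Your sketch is the natural attempt to lift the Kwon--Pilipczuk--Siebertz bounded-expansion argument to the nowhere dense regime, and you are right to flag the ``Main obstacle'' paragraph as the crux; that is indeed where the programme breaks. A few places where the plan is more fragile than it may look: the reduction needs a colouring of $V(H)$ with $\Oh(n^{\eps})$ colours such that for any $p$ colour classes with union $U$, the \emph{inflated} set $\tilde U = N_H^{\lfloor d/2\rfloor}[U]$ induces bounded treedepth in $H$. This is considerably stronger than a standard low-treedepth colouring of $H$, since $\tilde U$ is not a union of colour classes and its size is not controlled by $p$. In the bounded-expansion regime, transitive-fraternal augmentations keep in-degrees uniformly bounded, which is what lets one inflate $U$ to $\tilde U$ and still read off a treedepth bound from a colouring of the augmented graph; in the nowhere dense regime in-degrees grow by an $n^{\eps}$ factor per round, and while $\Theta(\log d)$ rounds stay within the $\Oh(n^{\eps'})$ colour budget, there is no known way to push the treedepth bound from $\widehat H[U]$ back to $H[\tilde U]$. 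The cover-based alternative you mention (sparse $d$-neighbourhood covers with bounded-treedepth pieces) is likewise not known. Your proposal correctly identifies both the programme and the fact that it does not close; had the methods you list sufficed, the paper would not have needed the indirect route it takes.
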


Unfortunately, the arguments provided in~\cite{KPS20,GajarskyKNMPST18} do not generalize to the nowhere dense setting and Conjecture~\ref{conj:lsc} remains open.
While this statement would expose interesting and potentially quite useful structure in powers of nowhere dense classes, 
one can also investigate those hypothetical corollaries and try to prove them using different means, in hope of gathering more insight.
Our work contributes to this line of research.


\subsection*{Our results.} We study properties of powers of nowhere dense classes related to the Erd\H{o}s-Hajnal conjecture. 
Recall that a graph class $\Cc$ has the {\em{Erd\H{o}s-Hajnal property}} if there exists $\delta>0$ such that every graph $G$ in $\Cc$ on $n$ vertices contains either a clique or an independent set of size at least~$n^\delta$.
The {\em{Erd\H{o}s-Hajnal conjecture}}~\cite{EH89} hypothesizes that for every fixed graph $H$, the class of graphs that exclude $H$ as an induced subgraph has the Erd\H{o}s-Hajnal property.
The conjecture is open for as simple graphs as $H=C_5$ and $H=P_5$, see the survey of Chudnovsky~\cite{Chudnovsky14} for an overview.

It is known that for every nowhere dense class $\Cc$ and integer $d\in \N$, the class $\Cc^d$ excludes some half-graphs as a semi-induced subgraph~\cite{AdlerA14}, and hence has the Erd\H{o}s-Hajnal property~\cite{MalliarisSh14} (see also~\cite[Chapter 5, Theorem 2.8]{notes} for a compact explanation).
However, when $\Cc$ has bounded expansion, it is a simple consequence of the existence of low shrubdepth colorings that the class $\Cc^d$ actually enjoys the strong Erd\H{o}s-Hajnal property defined as follows.
A class of graphs admits the {\em{strong Erd\H{o}s-Hajnal property}} if there exists a constant $\delta>0$ such that in every $n$-vertex graph $G$ with $n \ge 2$ from the class there exist disjoint vertex subsets $A,B$, each of size
at least $\delta n$, such that either every vertex of $A$ is adjacent to every vertex of $B$ (i.e.\ $A$ is {\em{complete}} to $B$),
or there is no edge with one endpoint in $A$ and the other in $B$ (i.e.\ $A$ is {\em{anticomplete}} to $B$).
It is known that for hereditary classes, the strong Erd\H{o}s-Hajnal property implies the (standard) Erd\H{o}s-Hajnal property~\cite{APPRS05}, but the converse implication is not true e.g.\ for triangle-free graphs.

Let us note that while the connection between low shrubdepth colorings and the strong Erd\H{o}s-Hajnal property was observed during the work on~\cite{KPS20,GajarskyKNMPST18}, it has not been included in any published manuscript. Hence, for the sake of completeness we clarify it in Appendix~\ref{app:shrubs}.

Conjecture~\ref{conj:lsc}, if true, would imply a weaker variant of the strong Erd\H{o}s-Hajnal property for powers of nowhere dense classes, where the lower bound of $\delta n$ on the cardinalities of $A$
and $B$ would be replaced by $n^{1-\eps}$, for any $\eps>0$ fixed in advance and sufficiently large $n$. The main result of this work is a direct proof of this statement; in fact, we show its significant generalization.

\begin{theorem}\label{thm:main}
Let $\calC$ be a nowhere dense graph class, $d$ be a positive integer, $\epsi$ be a positive~real.
Then there is an integer $k_0$ such that for every graph $G$ in $\Cc$ and vertex subsets $A$, $B$ in $G$ satisfying $|A \cup B|\geq k_0$,
there exist disjoint sets $A'$ and $B'$ with $A' \subseteq A$, $B'\subseteq B$ such that
$$|A'| \ge (1-\eps)\cdot |A|,\qquad\qquad |B'| \ge \frac{|B|}{|A \cup B|^{\eps}},$$
and in $G^d$, every vertex from $A'$ is adjacent either to all vertices from $B'$, or to no vertex from $B'$.
\end{theorem}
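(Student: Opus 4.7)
The plan is to combine the quantitative uniform quasi-wideness (UQW) available for nowhere dense classes with an $S$-profile bucketing argument and a thresholding step. The trivial regime $|B| \leq |A\cup B|^\eps$ is handled by taking $B' = \{b\}$ for any $b \in B$ and $A' = A$, since each vertex of $A$ is either adjacent to $b$ in $G^d$ or not. In the main regime, for any $\eps' > 0$ (to be chosen smaller than $\eps$), the quantitative UQW at radius $2d$ for the nowhere dense class $\Cc$ supplies constants $s = s(\Cc, d)$ and $c = c(\Cc, d, \eps')$ such that $B$ contains a subset $B_0$ of size at least $c|B|^{1-\eps'}$ that is $2d$-independent in $G - S$ for some $S \subseteq V(G)$ with $|S| \leq s$.

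Next, I would bucket $B_0$ by the \emph{$S$-profile} $\pi_v : S \to \{0, 1, \dots, d+1\}$ defined by $\pi_v(s) = \min(\dist_G(v, s), d+1)$. Since only $(d+2)^{s}$ profiles are possible, the largest bucket $B_1 \subseteq B_0$ has size $\geq |B_0|/(d+2)^{s}$ and a common profile $\pi^*$. The key structural identity, for any $b \in B_1$, is
\[\dist_G(a, b)\;=\;\min\bigl(\dist_{G-S}(a, b),\ D_a\bigr),\qquad D_a := \min_{s\in S}\bigl(\pi_a(s) + \pi^*(s)\bigr),\]
so $D_a$ depends only on $\pi_a$. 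This partitions $A$ into three types: \emph{Full} ($D_a \leq d$: $a$ is adjacent in $G^d$ to all of $B_1$); \emph{Empty} ($D_a > d$ and $\dist_{G-S}(a, b) > d$ for every $b \in B_1$: $a$ adjacent to none); and \emph{Bad} ($D_a > d$ but $\dist_{G-S}(a, b^*(a)) \leq d$ for a \emph{unique} $b^*(a) \in B_1$, uniqueness enforced by the $2d$-independence of $B_1$ in $G-S$, in which case $a$ is adjacent in $G^d$ to $b^*(a)$ alone among $B_1$).

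Finally, I would handle the Bad set $A_{\text{bad}}$ by thresholding on the counts $m_b := |\{a \in A_{\text{bad}} : b^*(a) = b\}|$, which satisfy $\sum_b m_b = |A_{\text{bad}}| \leq |A|$. Sort $B_1$ in decreasing order of $m_b$ and delete the minimal top prefix so that the sum on the residue $B'$ is at most $\eps|A|$. An elementary averaging estimate — using that the discarded top $k$ entries have total at most $|A|$ while the smallest discarded value caps each surviving $m_b$ from above — yields $|B'| \geq \Omega_\eps(|B_1|) \gtrsim_\eps |B|^{1-\eps'}$, which exceeds $|B|/|A\cup B|^\eps$ for $\eps' < \eps$ and $|A \cup B|$ at least some threshold $k_0$. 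Setting $A' := A \setminus \{a \in A_{\text{bad}} : b^*(a) \in B'\}$ enforces $|A'| \geq (1-\eps)|A|$ and ensures that every $a \in A'$ is either Full (adjacent to all of $B' \subseteq B_1$), Empty (adjacent to none of $B_1$, hence none of $B'$), or Bad with $b^*(a) \notin B'$ (adjacent in $G^d$ to no member of $B'$). The main obstacle is parameter orchestration: the polynomial loss from UQW, the constant-factor loss from bucketing, and the $\Omega_\eps(1)$ loss from thresholding must collectively fit within the budget $|B|/|A\cup B|^\eps$. This is feasible precisely because UQW for nowhere dense classes is available with an arbitrarily small polynomial-rate exponent, while the rigidity imposed by $2d$-independence is exactly what forces each Bad vertex to attach to a unique witness, making the final thresholding clean.
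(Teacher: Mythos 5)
The proposal takes a genuinely different route from the paper. The paper first establishes Lemma~\ref{lem:new-abs}, a $d$-separation statement proved via weak coloring numbers, an iterative ``good pair'' construction, and a localization step based on representative sets (Bollob\'as's theorem), and then combines it with bounds on distance-$d$ profile complexity. You instead invoke quantitative uniform quasi-wideness (a known equivalent characterization of nowhere denseness) to extract a $2d$-independent set $B_0 \subseteq B$ after deleting a \emph{constant-size} set $S$, bucket $B_0$ by $S$-profile, and exploit the elegant Full/Empty/Bad trichotomy that $2d$-independence forces on $A$. The trichotomy, the uniqueness of the Bad witness $b^*(a)$, and the thresholding on the counts $m_b$ are all correct, and the parameter orchestration in the main regime does close, modulo constants, exactly because UQW for nowhere dense classes is available at an arbitrarily small polynomial rate. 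This is a clean alternative idea.

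There is, however, a genuine gap: the theorem requires $A'$ and $B'$ to be \emph{disjoint}, and your construction does not ensure this, nor is the issue ever mentioned. Concretely, $B' \subseteq B_1 \subseteq B_0$ is disjoint from $S$, and $A' = A \setminus \{a \in A_{\mathrm{bad}} : b^*(a) \in B'\}$, so any vertex $v \in A \cap B'$ that is \emph{Full} survives in both $A'$ and $B'$. For $v \in B_1$ one has $\pi_v = \pi^*$, so $D_v = 2\min_{s\in S}\pi^*(s)$, and thus either all of $B_1$ is Full (when $\min_s \pi^*(s) \le d/2$, making $B_1$ a clique in $G^d$) or none of it is. In the latter case every $v \in A \cap B_1$ is Bad with $b^*(v) = v$, so disjointness does hold automatically. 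But in the former case the overlap $A' \cap B' = A \cap B'$ can be all of $B'$ --- for instance, take $A = B = B_1$, in which case your construction yields $A' = B' = B_1$. The situation is fixable by sacrificing the overlap to whichever side has surplus budget (the $\eps' < \eps$ slack makes both budgets polynomially loose), but this requires a case analysis on the relative sizes of $|A|$, $|B'|$, and $|A \cap B_1|$ that the proposal omits and that is not entirely routine. The paper's route avoids this entirely because in Lemma~\ref{lem:new-abs} any common element of $A'$ and $B'$ must lie in $S$, which is small, so the overlap is trivially affordable. Also note that the paper's Lemma~\ref{lem:new-abs} is reused to prove the FO-interpretation generalization (Theorem~\ref{thm:interpretations}); your more direct argument for Theorem~\ref{thm:main} would not obviously transfer to that setting.
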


If sets \(A'\) and \(B'\) satisfy Theorem~\ref{thm:main}, then we can find a subset
\(A''\) consisting of at least half of vertices of \(A'\) which is either complete or
anticomplete to \(B'\) in \(G^d\).
Hence, applying the theorem with \(A = B = V(G)\) implies that
in every sufficiently large \(n\)-vertex graph \(G\) from a fixed nowhere dense
class there are two disjoint sets which are either complete or anticomplete
to each other in \(G^d\) where the lower bounds on the sizes of the sets
are \((1/2 - \eps)n\) and \(n^{1-\eps}\), respectively.
Moreover, we may choose \(A'\) and \(B'\) from any prescribed vertex subsets
\(A\) and \(B\) and the lower bounds will be scaled appropriately.


The main technical step towards the proof of Theorem~\ref{thm:main} is the following lemma, which may be of independent interest.
Here, we say that a vertex subset $S$ {\em{$d$-separates}} vertex subsets $A$ and $B$ in a graph $G$ if every path of length at most $d$ with one endpoint in $A$ and second in $B$ intersects~$S$.

\begin{lemma}\label{lem:new-abs}
  Let $\calC$ be a nowhere dense graph class, $d$ be a positive integer, $\eps$ be a positive~real.
  Then there is an integer $k_0$ such that for every graph $G$ in $\Cc$ and vertex subsets $A$, $B$ in $G$ satisfying $|A \cup B|\geq k_0$,
  there exist a vertex subset $S$ and disjoint
  vertex sets $A'$ and $B'$
  with $A'\subseteq A$, $B'\subseteq B$ such that
  $$|A'|\geq (1-\eps)\cdot |A|,\qquad |B'|\geq \frac{|B|}{|A \cup B|^\eps},\qquad |S|\leq |A \cup B|^\eps,$$ 
  and $S$ $d$-separates $A'$ and $B'$.
\end{lemma}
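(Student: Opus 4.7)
My plan is to use \emph{polynomial uniform quasi-wideness} (pUQW), the characterization of nowhere dense classes due to Kreutzer, Rabinovich, and Siebertz: for every $r\in\NN$ and $\delta>0$ there exist constants $s_0$ and $N_0$ (depending on $\Cc$, $r$, $\delta$) such that for any $G\in\Cc$ and $W\subseteq V(G)$ with $|W|\geq N_0$, one can find $S_0\subseteq V(G)$ with $|S_0|\leq s_0$ and $W^*\subseteq W\setminus S_0$ of size at least $|W|^{1-\delta}$ whose vertices are pairwise at distance strictly greater than $r$ in $G-S_0$. I would apply this with $W=B$, $r=2d$, and $\delta=\eps/2$ to obtain a constant-size separator $S_0$ and a $2d$-scattered set $B^*\subseteq B\setminus S_0$ with $|B^*|\geq|B|^{1-\delta}$. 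The $2d$-scattering in $G-S_0$ is the key payoff: the sets $T_b:=N_d^{G-S_0}(b)\cap A$ for $b\in B^*$ are then pairwise disjoint, and therefore $\sum_{b\in B^*}|T_b|\leq|A|$.

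The next step is a random thinning of $B^*$. Sample $\widehat B\subseteq B^*$ by including each $b\in B^*$ independently with probability $p=\eps/4$, and let $\widehat T:=\bigcup_{b\in \widehat B}T_b$. By linearity of expectation and disjointness, $\mathbb{E}[|\widehat T|]\leq p|A|$, while $\mathbb{E}[|\widehat B|]=p|B^*|$. A Chernoff bound on $|\widehat B|$ paired with Markov's inequality applied to $|\widehat T|$ yields, with positive probability, an outcome in which $|\widehat B|\geq p|B^*|/2$ and $|\widehat T|\leq 2p|A|=(\eps/2)|A|$. Setting $S=S_0$, $B'=\widehat B$, and $A'=(A\setminus S_0)\setminus \widehat T$, we immediately have $A'\cap B'=\emptyset$ (any $x\in A\cap \widehat B$ lies in $T_x\subseteq \widehat T$ and is thus excluded from $A'$); moreover $S$ $d$-separates $A'$ from $B'$, since a path of length at most $d$ from $a\in A'$ to $b\in B'$ that avoided $S_0$ would witness $a\in T_b\subseteq \widehat T$, contradicting $a\in A'$.

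The desired size bounds are then an arithmetic check once $k_0$ is taken large enough in terms of $\Cc$, $d$, $\eps$: we get $|A'|\geq|A|-s_0-(\eps/2)|A|\geq(1-\eps)|A|$ whenever $|A|\geq 2s_0/\eps$, and $|B'|\geq(p/2)|B|^{1-\delta}\geq|B|/n^\eps$ using $|B|\leq n=|A\cup B|$ together with $\delta<\eps$, for $n$ sufficiently large. The step I expect to require the most care is calibrating the exponent: pUQW naturally delivers a $|W|^{1-\delta}$ bound, while the target is $|B|/n^\eps$, so the choice of $\delta$ must be tuned to $\eps$ and the inequality only closes because $|B|\leq n$. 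A secondary subtlety is the degenerate regime $|B|<N_0$, which lies outside the scope of pUQW; there $|B|/n^\eps\leq 1$ for $n\geq k_0$ large, so it suffices to take any $b\in B$ and set $B'=\{b\}$, $S=\{b\}$, $A'=A\setminus\{b\}$, since $b\in S$ trivially forces $S$ to intersect every path ending at $b$, and the size bounds $|A'|\geq(1-\eps)|A|$ and $|S|\leq n^\eps$ hold immediately.
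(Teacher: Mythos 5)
Your route is genuinely different from the paper's. The paper works directly with weak coloring numbers: it first proves a version of the lemma with bounds in terms of $|V(G)|$ via a careful ``good pair'' iterative extraction (Claims~\ref{clm:hub-vertices}--\ref{clm:asterisk}), and then converts $|V(G)|$ to $|A\cup B|$ by a localization step using representative families and Bollob\'as's Two Families Theorem. You instead invoke polynomial uniform quasi-wideness (which, with $|S_0|$ constant and $|W^*|\geq|W|^{1-\delta}$ for arbitrary $\delta>0$, is indeed a known characterization of nowhere denseness, essentially equivalent to the $\wcol$ bound the paper uses but proved elsewhere) applied only to $B$, combined with a probabilistic thinning to keep $\bigl|\bigcup_{b\in\widehat B}T_b\bigr|$ small. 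Given pUQW as a black box this is shorter and cleaner, and it even delivers the stronger conclusion that $S$ has constant size; what the paper's route buys is self-containedness, since pUQW for nowhere dense classes with exponent $1-\delta$ is itself a nontrivial theorem whose proof uses the same $\wcol$ machinery and a similar localization to a subgraph of controlled size.

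There is one real gap. You set $A'=(A\setminus S_0)\setminus \widehat T$ and conclude $|A'|\geq|A|-s_0-(\eps/2)|A|\geq(1-\eps)|A|$ ``whenever $|A|\geq 2s_0/\eps$'', but this hypothesis is not available: the lemma only guarantees $|A\cup B|\geq k_0$, and $|A|$ may be an arbitrarily small constant (or zero) while $B$ is huge, in which case subtracting the $s_0$ vertices of $A\cap S_0$ destroys the $(1-\eps)|A|$ bound. The fix is to \emph{not} delete $A\cap S_0$: take $A'=A\setminus(\widehat T\setminus S_0)$. Then $|A'|\geq|A|-|\widehat T|\geq(1-\eps/2)|A|$ unconditionally. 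This change is harmless for $d$-separation (any path from a vertex $a\in A'\cap S_0$ trivially meets $S=S_0$ at $a$, and for $a\notin S_0$ your original argument applies), and it preserves disjointness since $B'=\widehat B\subseteq B\setminus S_0$, so any $x\in A'\cap B'$ would lie in $T_x\subseteq\widehat T$ yet outside $S_0$, contradicting $x\in A'$. A similar small care is needed in your degenerate $|B|<N_0$ case: choose $b\in B\setminus A$ when possible so that $A'=A$, and otherwise note that $B\subseteq A$ forces $|A|=|A\cup B|\geq k_0\geq 1/\eps$ so the loss of a single vertex is affordable. With these repairs the argument goes through.
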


Theorem~\ref{thm:main} follows from combining Lemma~\ref{lem:new-abs} with known bounds on the neighborhood complexity in nowhere dense classes~\cite{EickmeyerGKKPRS17}.
However, Lemma~\ref{lem:new-abs} can be also used together with the newer bounds on the number of $\FO$ types in sparse graphs~\cite{PilipczukST18a} 
to give the following generalization of Theorem~\ref{thm:main} to $\FO$ interpretations
of nowhere dense classes (see Section~\ref{sec:interpretations} for relevant definitions).
\begin{theorem}\label{thm:interpretations}
Let $\calC$ be a nowhere dense graph class, $d$ be a positive integer, $\epsi$ be a positive~real, $\varphi(x,y)$ be an $\FO$ formula with two free variables.
Then there is an integer $k_0$ such that for every graph $G$ in $\Cc$ and vertex subsets $A$, $B$ in $G$ satisfying $|A \cup B|\geq k_0$,
there exist disjoint sets $A'$ and $B'$ with $A' \subseteq A$, $B'\subseteq B$ such that
$$|A'| \ge (1-\eps)\cdot |A|,\qquad\qquad |B'| \ge \frac{|B|}{|A \cup B|^{\eps}},$$
and for every $a \in A'$, either we have $G\models \varphi(a,b)$ for all $b\in B'$,
or $G\models \neg \varphi(a,b)$ for all $b\in B'$.
\end{theorem}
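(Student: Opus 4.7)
The plan is to derive Theorem~\ref{thm:interpretations} from Lemma~\ref{lem:new-abs} following the same template that proves Theorem~\ref{thm:main} from \cite{EickmeyerGKKPRS17}, but replacing the distance-profile argument by a Gaifman-locality argument combined with the type-counting theorem of \cite{PilipczukST18a}. First, I would apply Gaifman's locality theorem to $\varphi(x,y)$: there is an integer $r$ depending only on $\varphi$ such that $\varphi$ is equivalent to a Boolean combination of $r$-local formulas $\psi_1(x,y),\dots,\psi_t(x,y)$ together with finitely many basic local sentences. The basic local sentences have no free variables and therefore evaluate to constants in any fixed $G$, so one may assume $\varphi$ is itself a Boolean combination of the $\psi_i$'s. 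Then I would invoke Lemma~\ref{lem:new-abs} with distance parameter $d := 2r$ and a small $\epsilon' > 0$ to be chosen, obtaining $A' \subseteq A$, $B' \subseteq B$ and a set $S$ with $|A'| \ge (1-\epsilon')|A|$, $|B'| \ge |B|/|A\cup B|^{\epsilon'}$, $|S| \le |A\cup B|^{\epsilon'}$, such that every path of length at most $2r$ from $A'$ to $B'$ intersects $S$.

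The core step is a composition claim: for each $\psi_i$ there is a quantifier rank $q_i$ (depending only on $\psi_i$) such that for $a \in A'$ and $b \in B'$, the value $\psi_i(a,b)$ is determined by the pair of FO $q_i$-types of $a$ and $b$ over $S$ in $G$. The reason is that, since $\psi_i$ is $r$-local, $\psi_i(a,b)$ depends only on the induced subgraph $G[N^r[\{a,b\}]]$ with $a,b$ marked. The $2r$-separator property forces $N^r[a] \cap N^r[b] \subseteq N^r[S]$: any $v$ in this intersection gives a walk from $a$ to $b$ of length at most $2r$ through $v$, and the corresponding simple path (a subset of the walk's vertices) meets $S$ at a vertex $s$ which appears in the walk at graph-distance at most $r$ from $v$. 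Consequently $G[N^r[\{a,b\}]]$ decomposes into an ``$a$-piece'' (inside $N^r[a]$) and a ``$b$-piece'' (inside $N^r[b]$) glued along their common interaction with $S$, and a Feferman-Vaught-style composition (verifiable by an Ehrenfeucht-Fra\"iss\'e argument) reads off $\psi_i(a,b)$ from the two types over $S$.

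Set $q := \max_i q_i$ and invoke the type-counting theorem of \cite{PilipczukST18a}: for every $\epsilon'' > 0$, provided $|S|$ exceeds a threshold depending on $q$ and $\epsilon''$, the number of distinct FO $q$-types over $S$ realized by vertices of $G$ in $\Cc$ is at most $|S|^{1+\epsilon''}$. I would partition $B'$ by the $q$-type of each $b \in B'$ over $S$ and take the largest class $B''$, so that $|B''| \ge |B'|/|S|^{1+\epsilon''} \ge |B|/|A\cup B|^{\epsilon'(2+\epsilon'')}$. Tuning $\epsilon' \le \epsilon$ and $\epsilon'(2+\epsilon'') \le \epsilon$ makes both size bounds consistent with the theorem statement. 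By the composition claim, for any $a \in A'$ and any $b_1, b_2 \in B''$ we have $\psi_i(a, b_1) \Leftrightarrow \psi_i(a, b_2)$ for every $i$, and hence $\varphi(a, b_1) \Leftrightarrow \varphi(a, b_2)$ since $\varphi$ is a Boolean combination of the $\psi_i$'s. The pair $(A', B'')$ then satisfies the theorem's conclusion.

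The main obstacle is the composition claim: one must verify that a $d$-separator in the sense of Lemma~\ref{lem:new-abs}, which blocks only paths of length at most $d$, is a sufficient substitute for a genuine vertex separator when the formula is $r$-local with $d \ge 2r$, and pin down the quantifier rank $q_i$ so that the type-counting theorem of \cite{PilipczukST18a} applies at that rank. The remainder is bookkeeping to ensure that the constants $r$, $t$, $q$, $\epsilon'$, $\epsilon''$, and the threshold $k_0$ all come out in terms of $\varphi$ and $\epsilon$.
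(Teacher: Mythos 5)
Your high-level strategy is the same as the paper's: apply \Cref{lem:new-abs} to get a small $d$-separator $S$ between $A'$ and $B'$, then argue that once $b,b'$ ``look alike'' with respect to $S$, the formula cannot distinguish $(a,b)$ from $(a,b')$, and finally keep a large sub-block of $B'$ by a pigeonhole on the number of equivalence classes. The difference is that the paper packages the ``looking alike'' step directly into one cited statement (\Cref{lem:types}, extracted from \cite{PilipczukST18a}), which hands you a coloring of all vertices with $c\cdot|S|^c$ colors such that equal color plus the separator condition forces $\varphi$ to agree, whereas you attempt to re-derive that statement from first principles via Gaifman locality, a Feferman--Vaught composition, and the raw type-counting theorem.

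The gap in your proposal is precisely that composition claim, and it is not a detail that can be waved away by ``verifiable by an Ehrenfeucht--Fra\"iss\'e argument.'' First, the parameter is off: from the $2r$-separator property you correctly derive $N^r[a]\cap N^r[b]\subseteq N^r[S]$, but that only makes $N^r[a]\setminus N^r[S]$ and $N^r[b]\setminus N^r[S]$ \emph{disjoint}; it does not prevent \emph{edges} between them. An edge from $u\in N^r[a]\setminus N^r[S]$ to $w\in N^r[b]\setminus N^r[S]$ gives an $a$--$b$ walk of length at most $2r+1$, which a $2r$-separator is not required to block. So you need at least $d=2r+1$ (and depending on how much slack the EF game requires, plausibly more) to obtain the ``glued along $N^r[S]$'' picture. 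Second, and more seriously, even with the separation fixed, the claim that the $r$-local value $\psi_i(a,b)$ is determined by the pair of ordinary FO $q$-types of $a$ and $b$ over $S$ is exactly the non-trivial content of the result the paper cites; the ``glue'' $N^r[S]\cap N^r[\{a,b\}]$ can be large, the FO $q$-type of $a$ over $S$ does not obviously encode the isomorphism type of $G[N^r[a]]$ together with its attachment to $S$, and making this precise is what \cite[Lemma~3.1]{PilipczukST18a} does. You flag this yourself as ``the main obstacle,'' but as written the proposal assumes it rather than proves it. Once you replace your composition-plus-type-count combination by the single packaged statement \Cref{lem:types} (coloring with $c\cdot|S|^c$ colors), the rest of your bookkeeping — choosing $\eps'$ small relative to $\eps$ and the exponent, picking a monochromatic $B''\subseteq B_0$, and setting $A'=A_0$ — matches the paper's proof.
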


Finally, we show that unlike in the bounded expansion case, the nowhere dense classes of
graphs do not have the strong Erd\H{o}s-Hajnal property.
Therefore, in Theorem~\ref{thm:main} we cannot require that both of the subsets
\(A'\) and \(B'\) contain a fixed positive fraction of sets \(A\) and \(B\) respectively.

\begin{proposition}\label{prop:no-strong-EH}
 There exists a nowhere dense class of graphs that does not enjoy the strong Erd\H{o}s-Hajnal property.
\end{proposition}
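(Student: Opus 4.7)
The plan is to exhibit a concrete class $\Cc$ of graphs that is nowhere dense yet lacks the strong Erdős-Hajnal property. The intuition is that SEH failure requires a degree of pseudorandomness (neither a large complete pair nor a large anticomplete pair), while nowhere denseness requires sparsity of shallow minors; these demands can be reconciled using non-bipartite expander graphs of very slowly growing degree and large girth. Concretely, I would set $d_n := \lceil \log\log n \rceil$ and, for each sufficiently large $n$, choose a non-bipartite $d_n$-regular graph $G_n$ on $n$ vertices with girth at least $(\log n)/(3\log d_n)$ and second largest eigenvalue $\lambda_2(G_n) \leq 3\sqrt{d_n}$. Such $G_n$ exist by a standard application of Friedman's theorem on random regular graphs, combined with first-moment bounds on short cycles and bipartiteness; alternatively, one could use explicit Ramanujan-type algebraic constructions. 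Define $\Cc := \{G_n : n \geq n_0\}$.

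For nowhere denseness, the key observation is that high girth severely restricts shallow clique minors. If $G_n$ has girth greater than $8r+4$, then $G_n$ cannot contain $K_{2,2}$ as a depth-$r$ minor: four disjoint branch sets of radius at most $r$ joined by four inter-branch edges in the $K_{2,2}$ pattern yield a closed trail of length at most $4 \cdot 2r + 4 = 8r + 4$ in $G_n$, hence a cycle of length at most $8r+4$, contradicting girth. Since $\text{girth}(G_n) \to \infty$, the inequality $\text{girth}(G_n) > 8r+4$ holds for all but finitely many $n$, and the finitely many exceptions have bounded size. Therefore $\sup_{G\in\Cc}\omega(\nablaOp r G) < \infty$ for every fixed $r$, so $\Cc$ is nowhere dense.

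For the failure of SEH, assume toward contradiction that $\Cc$ enjoys SEH with some constant $\delta > 0$. Pick $n$ large enough that $d_n > (3/\delta)^2$ and $d_n n / 2 < \delta^2 n^2$ (both hold eventually since $d_n = \Oh(\log\log n)$). Then there exist disjoint $A, B \subseteq V(G_n)$ with $|A|, |B| \geq \delta n$ that are either complete or anticomplete in $G_n$. Completeness would require $|A||B| \ge \delta^2 n^2$ edges between $A$ and $B$, but $G_n$ has only $d_n n/2$ edges in total, a contradiction. Anticompleteness is ruled out by the expander mixing lemma:
\[
e(A, B) \;\ge\; \frac{d_n |A||B|}{n} - \lambda_2(G_n) \sqrt{|A||B|} \;\ge\; \delta^2 d_n n - 3\sqrt{d_n}\, \delta n,
\]
which is strictly positive by the choice $d_n > (3/\delta)^2$. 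Either way we obtain a contradiction, so $\Cc$ cannot satisfy SEH.

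The main obstacle is arranging for the same sequence of graphs to simultaneously satisfy all four qualitative requirements: degree growing (to defeat every $\delta$ in the mixing estimate), large girth (for the minor-cycle argument yielding nowhere denseness), near-optimal spectral expansion (so expander mixing is effective), and non-bipartiteness (otherwise a bipartite graph always admits linear-sized anticomplete pairs inside a single part). The resolution is to invoke a nontrivial existence theorem for near-Ramanujan graphs, available both probabilistically (via Friedman's theorem) and explicitly (via algebraic constructions). A secondary technical point is writing out carefully the closed-trail argument bounding girth from a $K_{2,2}$ depth-$r$ minor; this is routine but requires choosing shortest intra-branch paths so that the resulting walk uses no edge twice.
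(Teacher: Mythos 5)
Your high-level strategy --- pass through a random-like sparse graph of unbounded girth, argue that girth growth forces nowhere denseness while pseudorandomness rules out linear-sized complete or anticomplete pairs --- coincides with the paper's. The nowhere-denseness step (a $K_{2,2}$ depth-$r$ minor forces a cycle of length at most $8r+4$, so the high-girth members exclude $K_4$ as a shallow minor, and the finitely many low-girth members are of bounded size) is correct and is exactly the implicit content of the paper's remark that ``the sizes of graphs in $\Cc$ are bounded by a function of their girth.'' Where the two proofs part ways is the pseudorandomness step. The paper works with Erd\H{o}s--R\'enyi graphs at $p = n^{1/(2g)-1}$: it bounds the expected number of cycles of length at most $g$ by $\sqrt{n}$, shows by a plain union bound over the $4^n$ pairs of vertex subsets that a complete or anticomplete pair of size $n/g$ appears with probability less than $1/3$, fixes a good outcome, and deletes one vertex from each short cycle. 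You instead aim for near-Ramanujan regular graphs and the expander mixing lemma.

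The concrete gap is the claim that a non-bipartite $d_n$-regular graph on $n$ vertices with girth at least $(\log n)/(3\log d_n)$ and $\lambda_2 \le 3\sqrt{d_n}$ ``exists by a standard application of Friedman's theorem \dots combined with first-moment bounds on short cycles.'' Random $d$-regular graphs with $d\ge 3$ do not have girth growing with $n$: the number of $k$-cycles converges to a Poisson law of mean $(d-1)^k/(2k)$, so the girth is a bounded random variable, and Friedman's theorem (stated for fixed $d$, moreover) controls the spectrum, not the girth. To obtain a high-girth pseudorandom graph you must do what the paper does --- bound the number of short cycles and delete a vertex from each --- after which the graph is no longer regular, so the mixing inequality has to be re-justified on the cleaned graph (this is recoverable, since mixing applied to the original graph bounds $e(A,B)$ from below for any two surviving sets, but it must be said). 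Once these repairs are made the argument works, but it becomes the paper's construction plus a spectral detour; the paper's direct union bound over pairs of subsets avoids spectral control, regularity, and non-bipartiteness bookkeeping altogether and is considerably lighter.
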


This proposition was already shown in~\cite[Theorem~50]{JNOdMS20}, but we propose an
arguably simpler explanation of this fact.

\subsection*{Other related work.}
Very recently, Ne\v{s}et\v{r}il et al.~\cite{NOdMPZ19} proved that powers of sparse graph classes admit good clustering properties in the following sense:
if $\Cc$ is a nowhere dense class and $d\in \N$ is fixed, then every graph from $\Cc^d$ can be vertex-partitioned into cliques so that contracting every clique to a single vertex yields a graph from another
nowhere dense class $\Dd$, which depends only on $\Cc$ and $d$. The same holds also when the notion of nowhere denseness is replaced with having bounded expansion.
From this result it is possible to derive a weaker formulation of Theorem~\ref{thm:main}, where both sides are allowed to have slightly sublinear sizes, see Appendix~\ref{app:shrubs} for a discussion.

Malliaris and Shelah~\cite{MalliarisSh14} gave improved bounds for the Regularity Lemma in classes of graphs that exclude a {\em{semi-induced half-graph}} of a constant length, also known as {\em{combinatorially stable}} classes.
It is known that powers of nowhere dense classes, and even their images under $\FO$ interpretations, have this property~\cite{AdlerA14}, hence the results of Malliaris and Shelah apply to our setting.
From the results provided in~\cite{MalliarisSh14} it is possible to derive statements of flavor similar to the strong Erd\H{o}s-Hajnal property, 
however the obtained bounds would be significantly weaker than the ones provided by Theorem~\ref{thm:main}.

The ``almost'' strong Erd\H{o}s-Hajnal property of similar flavor as the one provided by Theorem~\ref{thm:main} was proved by Fox et al.~\cite{FPT10} for the class of intersection graphs of $x$-monotone curves in the plane.
Namely, if $\Cc$ is this class, then
there is a positive constant $\delta$ such that in every $n$-vertex graph $G$ in $\Cc$ with $n \geq 2$, one can find 
either find disjoint vertex subsets $A$ and $B$ such that $|A|,|B|\geq \delta n/\log n$ and $A$ is complete to $B$,
or disjoint vertex subsets $A$ and $B$ such that $|A|,|B|\geq \delta n$ and $A$ is anticomplete to $B$.
In general, the strong Erd\H{o}s-Hajnal property appears often in the setting of geometric intersection graphs, for instance
the class of intersection graphs of convex subsets of the plane enjoys it~\cite{FPT10}. See~\cite{FP08} for a survey on this line of research.

Finally, in a very recent series of papers, Scott, Seymour and Spirkl study how large {\em{pure pairs}} --- disjoint vertex subsets $A$ and $B$ such that $A$ is either complete or anticomplete to $B$ --- can be found in graphs drawn from various graph classes defined by forbidding induced subgraphs. In particular, in one of the works~\cite{ScottSS19} they prove a statement of flavor very similar to ours: they extract a pure pair $(A,B)$ such that $|A|=\Omega(n)$ and $|B|=\Omega(n^{1-\epsi})$ for any $\epsi>0$ fixed beforehand. The setting is, however, very different: they assume excluding a ``long'' subdivision of a fixed graph~$H$ as an induced subgraph, which imposes structure unlike the one found in powers of sparse~graphs.

\subsection*{Organization.} 
In Section~\ref{sec:preliminaries} we list the preliminary definitions. 
In particular, we define the key sparsity measure for our work, the weak coloring numbers. 
In Section~\ref{sec:lemmas} we prove our main technical contribution, i.e.\ Lemma~\ref{lem:new-abs}. 
In Section~\ref{sec:proofs-of-the-theorem} we wrap up the proof of Theorem~\ref{thm:main} 
and in Section~\ref{sec:interpretations} we show how to lift it to FO interpretations of nowhere dense classes.
Section~\ref{sec:example} describes a nowhere dense class without the strong Erd\H{o}s-Hajnal property.
In Appendix~\ref{app:shrubs} we include a proof that a class with low shrubdepth colorings admits the strong Erd\H{o}s-Hajnal property.

\section{Preliminaries}
\label{sec:preliminaries}

All graphs in this paper are finite, simple, and undirected.
For a graph $G$, we denote by $V(G)$ and $E(G)$ the vertex set and the edge set of $G$, respectively.

A \emph{walk} (of \emph{length} \(k\)) in a graph \(G\) is a nonempty alternating sequence
\(v_0 e_0 v_1 e_1 \dots e_{k-1} v_k\) of vertices and edges in \(G\) such that \(e_i = v_i v_{i+1}\)
for all \(i \in \{0, \dots, k - 1\}\).
Thus, a path is a graph in which there is a walk containing each vertex and each edge exactly once.
A walk whose first vertex is \(a\) and the last vertex is \(b\) is called an \emph{\(a\)--\(b\) walk}. 
The \emph{distance} \(\dist_G(a, b)\) between two vertices \(a\) and \(b\) in $G$ is the length of a shortest \(a\)--\(b\) walk, or \(+\infty\) if there is no \(a\)--\(b\) walk in \(G\).
For a set \(X\) of vertices in a graph \(G\), the set of all vertices at distance at most $1$ from any vertex from $X$ is denoted by $N_G[X]$, and 
the set of all vertices at distance at most \(r\) from any vertex from \(X\) is denoted by \(N_G^r[X]\).
When \(X\) is a singleton \(\{v\}\), we write \(N_G[v]\) and \(N_G^r[v]\) as shorthands for \(N_G[\{v\}]\) and \(N_G^r[\{v\}]\), respectively.
The \emph{radius} of a connected graph $G$ is the least integer $r$ for which there is a vertex $v\in V(G)$ such that $N^r_G[v]=V(G)$.
The \(d\)-th \emph{power} of a graph \(G\), denoted \(G^d\), is a graph on the vertices of \(G\) in which two distinct vertices are made adjacent if and only if the distance between them in \(G\) is at most \(d\).

Given a partition $\calX$ of the vertices of a graph $G$ into nonempty parts inducing connected subgraphs,
we denote by $G/\calX$ the graph with vertex set $\calX$ in which two distinct parts $X, Y \in \calX$ are adjacent if in $G$ there exists an edge with one endpoint in $X$ and second in $Y$.
A graph $H$ is a \emph{minor} of $G$ if $H$ is isomorphic to a subgraph of $G/\calX$ for some partition $\calX$ of $V(G)$.

We now recall the definitions of classes with bounded expansion and of nowhere dense classes.
A graph $H$ is a \emph{depth-$r$ minor} (also known as an \emph{$r$-shallow minor})  of a graph $G$
if $H$ is isomorphic to a subgraph of $G/\calX$ for some partition $\calX$ of $V(G)$ into
nonempty parts inducing subgraphs of radius at most $r$.
The \emph{greatest reduced average density} (\emph{grad}) of \emph{depth $r$} of a nonempty graph $G$, denoted by $\nabla_r(G)$, is defined as
\[\nabla_r(G)=\max\left\{\,\frac{\norm{E(H)}}{\norm{V(H)}} \ \colon\ \textrm{$H$ is a nonempty depth-$r$ minor of $G$}\,\right\}.\]
We say that a class of graphs $\calC$ has \emph{bounded expansion} if there exists a function $f \colon \NN \to \RR$ such that
$\nabla_r(G)\leq f(r)$ for all $r\geq 0$ and $G$ in $\calC$.
More generally, a class of graphs $\calC$ is \emph{nowhere dense} if for each integer $r\geq 0$ there exists a graph
which is {\em not} a depth-$r$ minor of any graph $G$ from $\calC$.

Much of the interest that these concepts have gathered in the recent years can be attributed to the realization that 
they admit multiple seemingly different characterizations.
In this work, we mostly use the characterization through {\em{weak coloring numbers}},
and, at the very end, we use some bounds for the {\em{neighborhood complexity}}.
We now proceed with the definitions of the weak coloring numbers,
while the neighborhood complexity is introduced and applied in Section~\ref{sec:proofs-of-the-theorem}.

Let $G$ be a graph and let $\sigma$ be a vertex ordering of $G$.
For a nonnegative integer $r$ and two vertices $u$ and $v$ of $G$, we say that
$u$ is \emph{weakly $r$-reachable} from $v$ in $\sigma$, if there exists
a $u$--$v$ walk of length at most $r$ 
such that for every vertex $w$ on the walk, $w$ is not smaller than $u$ in $\sigma$.
The set of vertices that are weakly $r$-reachable from a vertex $v$ in $\sigma$ is denoted by $\WReach_r[G, \sigma, v]$.
We define
\begin{align*}
\wcol_r(G, \sigma) &= \max_{v \in V(G)}\ |\WReach_r[G, \sigma, v]|,\\
\wcol_r(G) &= \min_{\sigma}\ \wcol_r(G, \sigma),
\end{align*}
where $\sigma$ ranges over the set of all vertex orderings of $G$.
We call $\wcol_r(G)$ the $r$-\emph{th weak coloring number} of $G$.

By the results of Zhu~\cite{Zhu09} and of 
Ne{\v{s}}et{\v{r}}il and Ossona de Mendez~\cite{NesetrilM11},
a class of graphs $\calC$ is nowhere dense if and only if
for every nonnegative integer $r$ and every $\epsi>0$,
there exists an integer $n_0$ such that for every $n\geq n_0$ and every $n$-vertex graph $G$ in $\calC$,
we have $\wcol_r(G) \le n^{\epsi}$.
As this holds for all \(\epsi\), this means that $\wcol_r(G) = o(n^{\epsi})$.

\section{Proof of Lemma~\ref{lem:new-abs}}
\label{sec:lemmas}

In order to prove Lemma~\ref{lem:new-abs}, we first prove a weaker version,
and then we show how to get the stronger result with a localization argument that
restricts attention to a subgraph of small size.
In the weaker version of the lemma, the \(|A \cup B|^{\epsi}\) terms are
replaced by \(|V(G)|^\epsi\) and we don't require the sets \(A'\) and \(B'\)
to be disjoint.

Recall that a vertex subset \(S\) \(d\)-separates two vertex subsets \(A\)
and \(B\) in a graph \(G\) if every path of length at most \(d\) with
endpoints in \(A\) and \(B\) intersects \(S\).
This is equivalent to saying that in the graph \(G - S\) there is no
\(a\)--\(b\) walk of length at most \(d\) with
\(a \in A \setminus S\) and \(b \in B \setminus S\).

\begin{lemma}\label{lem:weak-abs}
  Let $\calC$ be a nowhere dense class of graphs, $d$ be a positive integer, and $\eps$ be a positive~real.
  Then there is a constant $n_0\in \N$ such that for every \(n\)-vertex graph $G$ in $\Cc$ with \(n \ge n_0\) and vertex subsets $A,B\subseteq V(G)$,
  there exist a vertex subset $S\subseteq V(G)$ and sets $A'\subseteq A$ and $B'\subseteq B$ such that
  $$|A'|\geq (1-\eps)\cdot |A|,\qquad |B'|\geq \frac{|B|}{n^\eps},\qquad |S|\leq n^\eps,$$ 
  and $S$ $d$-separates $A'$ and $B'$.
\end{lemma}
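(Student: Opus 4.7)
Plan.

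The natural tool is the weak coloring characterization of nowhere dense classes: for every $\delta > 0$, there is $n_0$ such that every $G \in \calC$ with $|V(G)| \geq n_0$ admits an ordering $\sigma$ with $\wcol_d(G, \sigma) \leq n^\delta$. I fix such a $\sigma$, choosing $\delta = \delta(\epsi, d)$ small enough (a concrete choice like $\delta = \epsi/3$, subject to readjustment in the bookkeeping), let $c := n^\delta$, and write $W(v) := \WReach_d(G, \sigma, v)$, so that $|W(v)| \leq c$ for every $v$.

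The observation driving the construction is classical: for any walk $P$ of length at most $d$ between $a$ and $b$, its $\sigma$-minimum vertex $m$ belongs to $W(a) \cap W(b)$, because the subwalks from $a$ and from $b$ to $m$ stay at vertices $\geq m$ in $\sigma$ and thus witness weak $d$-reachability on both sides. Hence any set $S$ with $W(a) \cap W(b) \subseteq S$ for all $a \in A'$, $b \in B'$ automatically $d$-separates $A'$ from $B'$. In particular, the choice $S := \bigcup_{b \in B'} W(b)$ always $d$-separates $A$ from $B'$ (since $W(a) \cap W(b) \subseteq W(b) \subseteq S$), and no trimming of $A$ is needed — we may take $A' := A$. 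The task thus reduces to a combinatorial selection problem: find $B' \subseteq B$ with $|B'| \geq |B|/n^\epsi$ and $\bigl|\bigcup_{b \in B'} W(b)\bigr| \leq n^\epsi$.

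I would attack this selection in two regimes. When $|B|$ is small, say $|B| \leq n^{2\epsi}/c$, the trivial bound $\bigl|\bigcup_{b \in B} W(b)\bigr| \leq |B| \cdot c \leq n^\epsi$ does the job with $B' := B$. When $|B|$ is large, I would first extract the set $S_0$ of "$B$-popular" witnesses, namely vertices $v$ with $|\{b \in B : v \in W(b)\}| \geq c|B|/n^\epsi$; the double counting $\sum_v |\{b : v \in W(b)\}| = \sum_b |W(b)| \leq |B| c$ gives $|S_0| \leq n^\epsi$. The refinement then amounts to producing $B' \subseteq B$ of the required size whose residual witnesses $W(b) \setminus S_0$ all lie in a small extension $S_1 \subseteq V(G) \setminus S_0$, and outputting $S := S_0 \cup S_1$.

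The main obstacle is the quantitative analysis in the large-$|B|$ regime: a naive pigeonhole on exact values of $W(b) \setminus S_0$ has up to $\binom{n}{c}$ classes, and a naive greedy "peel the most popular residual witness, restrict $B$ to those $b$ seeing it" only refines by a factor $\sim 1/n$ per round — catastrophic over $c = n^\delta$ rounds. The idea I would pursue to fix this is to work inside the support $U := \bigcup_{b \in B}(W(b) \setminus S_0)$, whose size is at most $|B| c$; averaging inside $U$ produces a residual witness of $B$-density at least a factor $c/|U|$ rather than $1/n$. Calibrating $\delta$ to be small enough in terms of $\epsi$ and $d$ makes the cumulative multiplicative loss over at most $c$ peeling rounds stay within a factor $n^\epsi$, at which point the remaining subfamily of $B$ has its residual $W$-sets fully absorbed in a set $S_1$ of size $\leq n^\epsi$, closing the argument.
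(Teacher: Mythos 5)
Your opening reduction has a structural flaw that makes the approach unworkable in the main regime, independently of the peeling difficulties you flag. You set $A' := A$ and $S := \bigcup_{b\in B'} W(b)$, and then reduce everything to finding $B' \subseteq B$ with $|B'| \geq |B|/n^\epsi$ and $\bigl|\bigcup_{b\in B'} W(b)\bigr| \leq n^\epsi$. But $b \in \WReach_d[G,\sigma,b]$ for every $b$ (the length-$0$ walk witnesses it), so $B' \subseteq S$ and hence $|S| \geq |B'| \geq |B|/n^\epsi$. The moment $|B| > n^{2\epsi}$ this forces $|S| > n^\epsi$, so the target is unattainable — not hard, but impossible. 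The same conclusion follows a second way from keeping $A' = A$: whenever $A \cap B \neq \emptyset$ (in particular in the headline case $A = B = V(G)$), every $v \in A' \cap B' = B'$ is a length-$0$ $A'$--$B'$ path and must lie in $S$, again giving $B' \subseteq S$. So ``no trimming of $A$ is needed'' is exactly the wrong conclusion; trimming $A$ (removing a small neighborhood of the eventual $B'$) is essential, and the separator cannot be taken to be a union of weak-reach sets over $B'$.

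The paper takes a genuinely different route precisely because of this. It uses $\wcol_{4kd}$ with $k = \lceil 1/\epsi\rceil$ (note the much larger radius, needed to make balls of radius $2k$ in $(G-S)^d$ small), builds the \emph{entire} separator $S$ up front as the set of vertices weakly $(4kd)$-reachable from more than an $\epsi/2$ fraction of $A$ (small by the same double counting you use for $S_0$, and crucially \emph{not} tied to the eventual $B'$), and then works in $H := (G-S)^d$, where every radius-$2k$ ball meets $A$ in at most $(\epsi/2)|A|$ vertices. The selection of $A'$ and $B'$ is then done by iteratively growing a ``good pair'' $(X,Y)$ with controlled neighborhoods in $H$: if the pair cannot be extended, one of the two sides is already large enough, and one finishes by setting $A' = (A\cap S)\cup X^*$, $B' = B\setminus N_H[X^*]$ (or symmetrically). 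In particular, $A'$ is \emph{not} all of $A$: it is $X^*$ together with $A\cap S$, and $B$-side vertices close to $X^*$ are discarded from $B$. If you want to salvage the weak-reachability viewpoint, the productive move is to prune the ``low'' vertices that are reachable from a constant fraction of $A$, not to union the weak-reach sets of the selected $B'$.
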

\begin{proof}
  Let \(k = \ceil{1/\epsi}\).
  Since \(\wcol_{4kd}(G) = o\left(|V(G)|^{\epsi}\right)\) for graphs \(G\) in \(\calC\), we can fix an integer \(n_1\) such that \(\wcol_{4kd}(G) \le (\epsi/2) n^{\epsi}\)
  for every \(n\)-vertex graph \(G\) in \(\calC\) with \(n \ge n_1\).
  Let \[n_0 = \max \left\{n_1, \ceil*{{(2/\epsi)}^{1/\epsi}}\right\}.\]
  Let \(G\) be an \(n\)-vertex graph in \(\calC\) with \(n \ge n_0\), and let \(A\) and
  \(B\) be two vertex subsets in \(G\).
  If \(A = \emptyset\), then the lemma statement is satisfied by setting \(A' = S = \emptyset\) and \(B' = B\).
  Hence we assume that \(A \neq \emptyset\).
  As \(n \ge n_0 \ge n_1\), we have \(\wcol_{4kd}(G) \le (\epsi/2) n^{\epsi}\).

\begin{claim}\label{clm:hub-vertices}
  There is a subset \(S\) of \(V(G)\) such that \(|S| \le n^{\epsi}\) and
  \(|N^{2kd}_{G-S}[v] \cap A| \le (\epsi/2)|A|\) for every \(v \in V(G - S)\). 
\end{claim}
\begin{proof}
  Let \(\sigma\) be a vertex ordering of \(G\) such that
  \(\wcol_{4kd}(G, \sigma) = \wcol_{4kd}(G) \le (\epsi/2) n^{\epsi}\).
  For a vertex \(s\in V(G)\), let \(\WReach^{-1}_{4kd}[G,\sigma,s]\) denote the
  set of all vertices \(u \in V(G)\) such that \(s \in \WReach_{4kd}[G,\sigma,u]\).
  Let \[S=\left\{\,s\in V(G) : |\WReach^{-1}_{4kd}[G,\sigma, s] \cap A| > (\epsi/2)|A|\,\right\}.\]
  Let us estimate the number \(p\) of pairs \((s, a) \in S \times A\) such that
  \(s\in \WReach_{4kd}[G,\sigma,a]\).
  On the one hand, each \(s \in S\) is weakly \((4kd)\)-reachable from at least \((\epsi/2)|A|\)
  vertices \(a \in A\), so \(p \ge |S| \cdot (\epsi/2)|A|\).
  On the other hand, for each \(a \in A\) there are at most
  \(\wcol_{4kd}(G)\) vertices \(s \in S\) which are weakly \((4kd)\)-reachable from \(a\), so \(p \le |A| \cdot \wcol_{4kd}(G) \le |A| \cdot (\epsi/2) n^{\epsi}\).
  Combining these two inequalities yields
  \(|S| \cdot (\epsi/2)|A| \le |A| \cdot (\epsi/2) n^{\epsi}\). Hence \(|S| \le n^{\epsi}\),
  as required.

  Consider any \(v \in V(G - S)\) and let \(\ell\) be the \(\sigma\)-minimal vertex of
  \(N^{2kd}_{G-S}[v]\).
  For any vertex \(u \in N^{2kd}_{G-S}[v]\), consider the \(\ell\)--\(u\) walk
  \(W\) obtained as the concatenation of an \(\ell\)--\(v\) walk of length at
  most \(2kd\) and an \(v\)--\(u\) walk of length at most \(2kd\); both these walks are chosen in $G-S$.
  The walk \(W\) has all its vertices in \(N^{2kd}_{G-S}[v]\),
  so \(\ell\) is the \(\sigma\)-minimal vertex of the walk \(W\).
  Since the length of \(W\) is at most \(4kd\), the walk \(W\) witnesses
  that \(\ell \in \WReach_{4kd}[G, \sigma, u]\).
  As the vertex \(u\) was chosen arbitrarily, we conclude that
  \(N^{2kd}_{G-S}[v] \subseteq \WReach^{-1}_{4kd}[G, \sigma, \ell]\).
  Since \(\ell \not \in S\), this implies that
  \[|N^{2kd}_{G-S}[v] \cap A| \le |\WReach^{-1}_{4kd}[G, \sigma, \ell] \cap A| \le (\epsi/2)|A|.\]
  This completes the proof of the claim.
\cqed\end{proof}
  
  Let us fix a set \(S\) as in Claim~\ref{clm:hub-vertices}. 
  Let \[H = {(G - S)}^d,\qquad A_0 = A \setminus S,\qquad \textrm{and}\qquad B_0 = B \setminus S.\]
  Then for every \(v \in V(H)\), we have 
  \begin{equation}
  |N_H^{2k}[v] \cap A_0| = |N_{G-S}^{2kd}[v] \cap A| \le ({\epsi}/{2})|A|.
  \label{eq:small-balls}
  \end{equation}
  Note that $S$ $d$-separates two subsets \(A'\) and \(B'\) of \(V(G)\) in $G$
  if and only if $A'\setminus S$ is anticomplete to $B'\setminus S$ in $H$
  (i.e.\ the two sets are disjoint and there is no edge between them).
  Hence, to complete the proof it suffices to show that there are subsets \(A'\) and \(B'\) of \(A\) and \(B\), respectively,
  such that \(|A'| \ge (1-\epsi)|A|\), \(|B'| \ge |B|/n^{\epsi}\), and 
  $A'\setminus S$ is anticomplete to $B'\setminus S$ in $H$.

  Let 
  $$\alpha = \frac{|A|}{|B|}\cdot ({\epsi}/{2}) n^{\epsi}\qquad\textrm{and}\qquad \beta = \frac{|B|}{|A|}\ .$$ 
  For two pairs \((X, Y)\) and \((A_1, B_1)\) with \(X \subseteq A_1 \subseteq A_0\) and \(Y \subseteq B_1 \subseteq B_0\), we say that the pair \((X, Y)\) is \emph{good in} \((A_1, B_1)\) if
  \begin{enumorig}[label={(G\arabic*)}]
      \item\label{itm:good-xy} $X$ is anticomplete to $Y$ in $H$,
      \item\label{itm:good-x} \(|N_H[X] \cap B_1| \le \beta |X|\), and
      \item\label{itm:good-y} \(|N_H[Y] \cap A_1| \le \alpha |Y|\).
  \end{enumorig}
  \begin{claim}\label{clm:good-union}
    If \((X_0, Y_0)\) is a good pair in \((A_0, B_0)\) and \((X_1, Y_1)\) is a good pair in \((A_1, B_1)\), where \(A_1 = A_0 \setminus (X_0 \cup N_H[Y_0])\) and \(B_1 = B_0 \setminus (Y_0 \cup N_H[X_0])\), then \((X_0 \cup X_1, Y_0 \cup Y_1)\) is a good pair in \((A_0, B_0)\).
    See~Figure~\ref{figs-extending-a-good-pair}.
  \end{claim}
  \begin{figure}[!h]
  \includegraphics{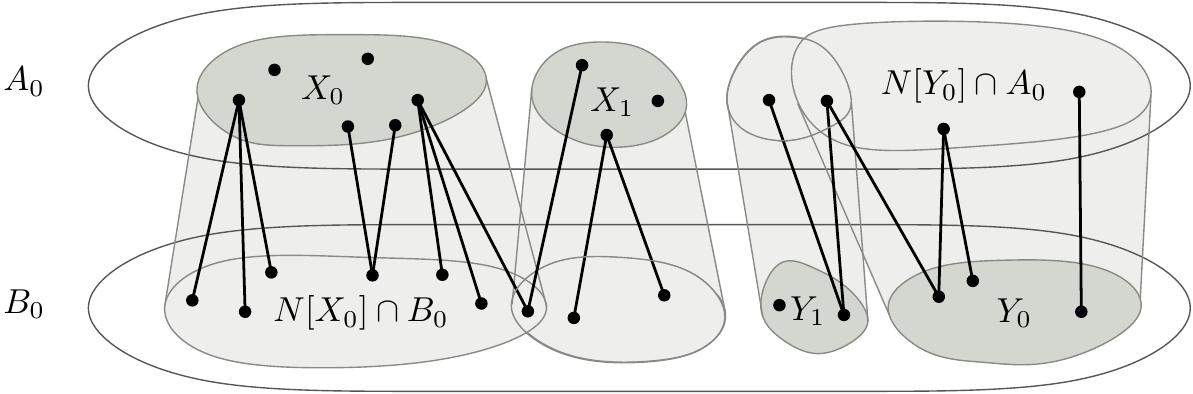}
  \caption{Two good pairs: $(X_0,Y_0)$ a good pair in $(A_0,B_0)$, and $(X_1,Y_1)$ a good pair in $(A_0\setminus (X_0\cup N[Y_0]), B_0 \setminus(Y_0\cup N[X_0]))$.
  \label{figs-extending-a-good-pair}}
  \end{figure}
  \begin{proof}
  Since $X_1\subseteq A_1$, we know that $X_1$ is anticomplete to $Y_0$ in $H$.
  Since \(Y_1 \subseteq B_1\), we know that $X_0$ is anticomplete to $Y_1$ in $H$.
  Since $(X_0,Y_0)$ and $(X_1,Y_1)$ satisfy~\ref{itm:good-xy}, we conclude that
  $X_0 \cup X_1$ is anticomplete to $Y_0 \cup Y_1$ in $H$. So this pair satisfies~\ref{itm:good-xy}.

  For the proof of~\ref{itm:good-x}, observe that
  \[|N_H[X_0 \cup X_1] \cap B_0|
  = |N_H[X_0] \cap B_0| + |N_H[X_1] \cap B_1|
  \le \beta|X_0| + \beta|X_1|
  = \beta|X_0 \cup X_1|.\]
  The proof of~\ref{itm:good-y} is symmetric:
  \[|N_H[Y_0 \cup Y_1] \cap A_0|
  = |N_H[Y_0] \cap A_0| + |N_H[Y_1] \cap A_1|
  \le \alpha|Y_0| + \alpha|Y_1|
  = \alpha|Y_0 \cup Y_1|.\]
  This concludes the proof of the claim.
  \cqed\end{proof}

  \begin{claim}\label{clm:small-good}
    If \((A_1, B_1)\) is a pair with \(\emptyset \neq A_1 \subseteq A_0\) and \(B_1 \subseteq B_0\), 
    then there exists a good pair \((X, Y)\) in \((A_1, B_1)\) such that \(X \cup Y \neq \emptyset\), \(|X| \le (\epsi/2)|A|\), and \(|N_H[Y] \cap A_1| \le (\epsi/2)|A|\).
  \end{claim}
  In fact, we shall prove that we can find such a pair $(X,Y)$ with either \(X = \emptyset\) or \(Y = \emptyset\), but this is immaterial for the rest of the proof.
  \begin{proof}
  Suppose to the contrary that there is no pair \((X, Y)\) satisfying the claim.
  Let \(a \in A_1\) be chosen arbitrarily. 
  Let $X^0=\{a\}$.
  For each \(i \in \{0, \dots, k-1\}\),
  let \[Y^i = N_H^{2i+1}[a] \cap B_1\qquad\textrm{and} \qquad X^{i+1} = N_H^{2i+2}[a] \cap A_1.\]
  See~Figure~\ref{figs-small-good-pair}.

  \begin{figure}[!h]
  \includegraphics{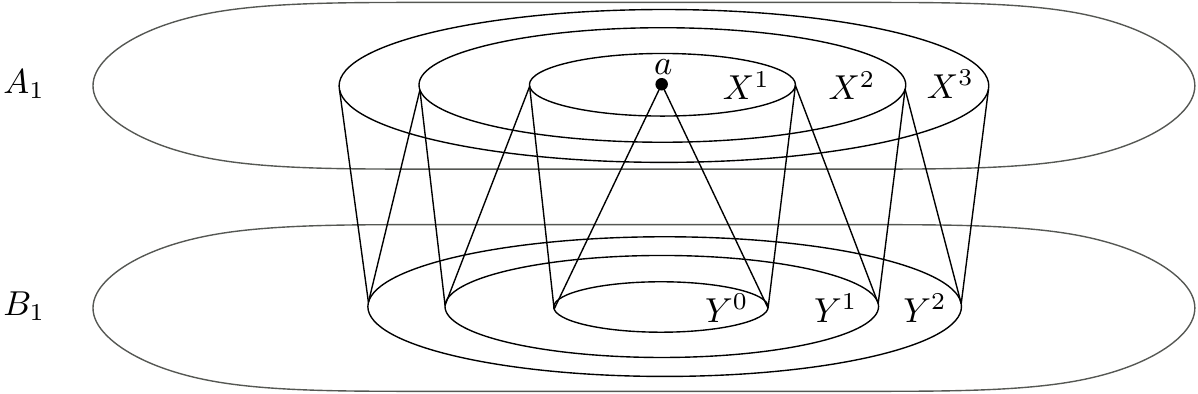}
  \caption{Two sequences of nested sets: $\{a\},X^1,\ldots,X^k$ and $Y^0,Y^1,\ldots,Y^{k-1}$.
  \label{figs-small-good-pair}}
  \end{figure}

  Clearly, $a\in X^i$ and $X^i \subseteq N_H^{2k}[a]\cap A_1$ for every $i\in\{0,\ldots,k\}$.
  Therefore, by~\eqref{eq:small-balls} we have \(1 \le |X^i| \le (\epsi/2)|A|\). 
  By our assumption the pair \((X^i, \emptyset)\) does not satisfy the claim, so it cannot be good 
  and must therefore break~\ref{itm:good-x}. 
  This way we have
  \[|Y^i| \geq |N_H[X^i] \cap B_1| > \beta \cdot |X^i|,
  \]
  for every $i\in\{0,\ldots,k-1\}$.
  In particular, \(Y^i\) is a nonempty subset of \(N_{H}^{2k-1}[a] \cap B_1\).
  Since \(N_H[Y^i] \cap A_1 \subseteq N_H^{2k}[a] \cap A_0\), 
  by \eqref{eq:small-balls} we have \(1\leq|N_H[Y^i] \cap A_1| \le (\epsi/2)|A|\).
  By our assumption the pair
  \((\emptyset, Y^i)\) does not satisfy the claim, so it cannot be good, 
  and must therefore break~\ref{itm:good-y}.
  This way we have 
  \[|X^{i+1}| \geq |N_H[Y^i] \cap A_1| > \alpha \cdot |Y^i|,
  \]
  for every $i\in\{0,\ldots,k-1\}$.

  Combining the inequalities, we obtain \(|X^{i+1}| > \alpha \beta |X^i|\) for every \(i \in \{0, \dots, k-1\}\).
  Therefore,
  \[|X^k| > {(\alpha\beta)}^k\cdot |X^0| = ((\epsi/2)n^{\epsi})^k\cdot 1  = (\epsi/2)^k n^{\epsi k} \ge (\epsi/2) n \ge (\epsi/2)|A|,\]
  which contradicts~\eqref{eq:small-balls} as $X^k \subseteq N_H^{2k}[a]\cap A_0$.
  \cqed\end{proof}

  \begin{claim}\label{clm:asterisk}
    For every good pair \((X, Y)\) in \((A_0, B_0)\) such that
    \begin{enumorig}[label=\textup{(\roman*)}, start=1]
      \item\label{itm:i} \(|A \cap S| + |X| \le (1-\epsi)|A|\) and
      \item\label{itm:ii} \(|B \cap S| + |Y| \le |B|/n^{\epsi}\),
    \end{enumorig}
    there exists a good pair \((X^*, Y^*)\) in \((A_0, B_0)\) with \(|X^*|+|Y^*| > |X| + |Y|\) such that
    \begin{enumorig}[label=\textup{(\roman*)}, start=3]
      \item\label{itm:iii} \(|A \cap S| + |X^*| \le (1-\epsi/2)|A|\) and
      \item\label{itm:iv} \(|N_H[Y^*] \cap A_0| \le \epsi|A|\).
    \end{enumorig}
\end{claim}
\begin{proof}
  Let \((X, Y)\) be a good pair in \((A_0, B_0)\) that satisfies~\ref{itm:i} and~\ref{itm:ii}. 
  Then by~\ref{itm:good-y} and~\ref{itm:ii} we have
  \begin{equation}
  |N_H[Y] \cap A_0| \le \alpha \cdot |Y| \le \frac{|A|}{|B|}\cdot (\epsi/2)n^{\epsi}\cdot |B|/n^{\epsi}=(\epsi/2)|A|.
  \label{eq:Y0-neighborhood}
  \end{equation}
  Let \(A_1 = A_0 \setminus (X \cup N_{H}[Y])\) and \(B_1 = B_0 \setminus (Y \cup N_{H}[X])\).
  We have
  \begin{align*}
  |A_1|
  &= |A| - (|A \cap S| + |X| + |N_H[Y] \cap A_0|)\\
  &\ge |A| - ((1-\epsi)|A| + (\epsi/2)|A|)&&\text{by~\ref{itm:i} and~\eqref{eq:Y0-neighborhood}}\\
  &= (\epsi/2)|A| > 0.
  \end{align*}
  Hence \(A_1 \neq \emptyset\).
  By Claim~\ref{clm:small-good}, there is a good pair \((X_1, Y_1)\) in \((A_1, B_1)\) with \(X_1 \cup Y_1 \neq \emptyset\) such that 
  \begin{equation}
  |X_1| \le (\epsi/2)|A|
  \label{eq:X1}
  \end{equation} 
  and 
  \begin{equation}
  |N_H[Y_1] \cap A_1| \le (\epsi/2)|A|.
  \label{eq:Y1-neighborhood}
  \end{equation}
  Let \(X^* = X \cup X_1\) and \(Y^* = Y \cup Y_1\).
  By Claim~\ref{clm:good-union}, the pair \((X^*, Y^*)\) is good in \((A_0, B_0)\). 
  Since \(X_1 \cap X\subseteq A_1 \cap X = \emptyset\), \(Y_1 \cap Y \subseteq B_1 \cap Y = \emptyset\), and \(X_1 \cup Y_1 \neq \emptyset\), we have
  \[|X^*| + |Y^*| = |X| + |Y| + |X_1| + |Y_1| > |X| + |Y|.\]

  Finally, we have
  \begin{align*}
  |A \cap S| + |X^*| &= |A \cap S| + |X| + |X_1|\\
  &\le (1-\epsi)|A| + (\epsi/2)|A| = (1-\epsi/2)|A|&&\text{by~\ref{itm:i} and~\eqref{eq:X1}}
  \end{align*}
  and
  \begin{align*}
  |N_H[Y^*] \cap A_0| &\le |N_H[Y] \cap A_0| + |N_H[Y_1] \cap A_1|\\
  &\le (\epsi/2)|A| + (\epsi/2)|A| = \epsi|A|&&\text{by~\eqref{eq:Y0-neighborhood} and~\eqref{eq:Y1-neighborhood},}
  \end{align*}
  as required.
  The claim follows.
\cqed\end{proof}
  We now come back to the proof of Lemma~\ref{lem:weak-abs}.
  If \(|A \cap S| \ge (1 - \epsi)|A|\), then the statement of the lemma is satisfied by taking \(A' = A \cap S\) and \(B' = B\).
  Therefore, we may assume that \(|A \cap S| \le (1 - \epsi)|A| \le (1-\epsi/2)|A|\).
  Hence, the pair \((X^*, Y^*) = (\emptyset, \emptyset)\) (which is a good pair in \((A_0, B_0)\))
  satisfies the conditions~\ref{itm:iii} and~\ref{itm:iv} from Claim~\ref{clm:asterisk}.
  Let us fix a good pair \((X^*, Y^*)\) in \((A_0, B_0)\) satisfying the conditions~\ref{itm:iii} and~\ref{itm:iv} from Claim~\ref{clm:asterisk} which maximizes the value of \(|X^*| + |Y^*|\).
  Since we cannot apply Claim~\ref{clm:asterisk} to increase \(|X^*| + |Y^*|\), the pair \((X^*, Y^*)\) must violate one of the conditions~\ref{itm:i} or~\ref{itm:ii}, so 
  \[|A \cap S| + |X^*| \ge (1-\epsi)|A|\qquad\text{or}\qquad |B \cap S| + |Y^*| \ge |B|/n^{\epsi}.
  \]

  Suppose first that \(|A \cap S| + |X^*| \ge (1-\epsi)|A|\).
  Let \(A' = (A \cap S) \cup X^*\) and \(B' = B \setminus N_H[X^*]\). This way we have \(|A'| = |A \cap S| + |X^*| \ge (1-\epsi)|A|\) and
  \begin{align*}
  |B'|
  &= |B| - |N_H[X^*] \cap B_0|\\
  &\ge |B| - \beta |X^*|&&\text{by~\ref{itm:good-x}}\\
  &\ge |B| - \frac{|B|}{|A|}\cdot (1-\epsi/2)|A|&&\text{by~\ref{itm:iii}}\\
  &= (\epsi/2)|B| \ge |B|/n^{\epsi}&&\text{as $n \ge n_0\geq (2/\epsi)^{1/\epsi}$.}
  \end{align*}
  Since $A'\setminus S=X^*$ and $B'\setminus S \subseteq B_0\setminus N_H[X^*]$,
  we conclude that $A'\setminus S$ is anticomplete to $B'\setminus S$ in~$H$.
  Therefore, the sets $A'$, $B'$, and $S$ satisfy all the properties requested in the lemma statement.

  Now suppose that \(|B \cap S| + |Y^*| \ge |B|/n^{\epsi}\).
  Let \(A' = A \setminus N_H[Y^*]\) and \(B' = (B \cap S) \cup Y^*\).
  This way we have
  \begin{align*}
  |A'| &= |A| - |N_H[Y^*] \cap A_0|\\
  &\ge |A| - \epsi|A| =(1-\epsi)|A|&&\text{by~\ref{itm:iv}.}
  \end{align*}
  and \(|B'| = |B \cap S| + |Y^*| \ge |B|/n^{\epsi}\).
  Again, $A'\setminus S$ is anticomplete to $B'\setminus S$ in $H$, so the sets $A'$,~$B'$, and $S$ satisfy all the properties requested in the lemma statement.
  This concludes the proof of Lemma~\ref{lem:weak-abs}.
\end{proof}

Now we are going to describe a localization argument using which we lift the statement of Lemma~\ref{lem:weak-abs} to obtain Lemma~\ref{lem:new-abs}.

Let $\calF$ be a family of sets, each of size at most $p$.
We say that a subfamily $\calF'$ of $\calF$ \emph{$q$-represents} $\calF$ if for every set \(S\) of size at most \(q\) such that some element of \(\calF\) is disjoint from \(S\), there is an element of \(\calF'\) which also is disjoint from \(S\).
It turns out that we can always find a $q$-representative subfamily of $\calF$ of size at most $\binom{p+q}{q}$.
This is a direct consequence of the following generalization of Bollob\'{a}s's Two Families Theorem:
\begin{lemma}
\label{lem:Bollobas}
Let $A_1,\ldots,A_m$ and $B_1,\ldots,B_m$ be two sequences of sets
such that
$A_i \cap B_j = \emptyset$ if and only if \(i = j\). Then
\[
\sum_{i=1}^m{\binom{a_i + b_i}{b_i}}^{-1} \le 1,
\]
where \(a_i = |A_i|\) and \(b_i = |B_i|\).
\end{lemma}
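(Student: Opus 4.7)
The plan is to deploy the classical probabilistic argument due to Bollobás based on a uniformly random linear ordering of the ground set. Let me write $U = \bigcup_{i=1}^m (A_i \cup B_i)$ and sample a uniformly random linear ordering $\pi$ of $U$. For each index $i$, I would define $E_i$ to be the event that every element of $A_i$ precedes every element of $B_i$ under $\pi$. Note that the hypothesis applied with $j=i$ forces $A_i \cap B_i = \emptyset$, so $E_i$ is a well-posed event.

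The first step is to evaluate $\Pr[E_i]$. Since the relative ordering that $\pi$ induces on any fixed subset of $U$ is itself uniform, $\Pr[E_i]$ depends only on $A_i \cup B_i$: among the $(a_i+b_i)!$ orderings of this set, exactly $a_i!\, b_i!$ place the $A_i$-elements entirely before the $B_i$-elements, giving
$$\Pr[E_i] = \binom{a_i+b_i}{b_i}^{-1}.$$

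The heart of the argument is the second step, where I would show that the events $E_i$ are pairwise disjoint. Fix distinct indices $i$ and $j$. By the hypothesis, $A_i \cap B_j \neq \emptyset$ and $A_j \cap B_i \neq \emptyset$, so I can pick $x \in A_i \cap B_j$ and $y \in A_j \cap B_i$. If $E_i$ held then $x \in A_i$ would precede $y \in B_i$ in $\pi$; if $E_j$ also held then $y \in A_j$ would precede $x \in B_j$ in $\pi$. These two assertions contradict one another, so $E_i \cap E_j = \emptyset$.

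Once pairwise disjointness is in hand, the conclusion is a one-liner:
$$\sum_{i=1}^m \binom{a_i+b_i}{b_i}^{-1} = \sum_{i=1}^m \Pr[E_i] = \Pr\!\left[\bigcup_{i=1}^m E_i\right] \le 1.$$
The only subtle step is the disjointness argument: one has to exploit the hypothesis for \emph{both} ordered pairs $(i,j)$ and $(j,i)$ to produce the two witnesses $x$ and $y$ that simultaneously obstruct $E_i$ and $E_j$. The probability computation itself is routine once one observes that $E_i$ is determined solely by the restriction of $\pi$ to $A_i \cup B_i$, regardless of how these sets overlap with the other $A_k$ and $B_k$.
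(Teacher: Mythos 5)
Your proof is correct and complete. The paper does not include its own proof of this lemma (it simply refers the reader to Jukna's book), but your argument via a uniformly random linear ordering of $U$ is precisely the classical probabilistic proof presented there, and you correctly handle the one subtle point—invoking the hypothesis for both ordered pairs $(i,j)$ and $(j,i)$ to obtain the distinct witnesses $x \in A_i \cap B_j$ and $y \in A_j \cap B_i$ that force $E_i \cap E_j = \emptyset$.
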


We refer the reader for instance to~\cite{Jukna-book} for a very elegant proof of this statement.
In the sequel, we will use the following.

\begin{corollary}
\label{cor-representative-sets}
  Let $p$ and $q$ be positive integers and 
  let $\calF$ be a family of sets, each of size at most~$p$.
  Then there is a subfamily $\calF'$ of $\calF$ with $|\calF'| \leq \binom{p + q}{p}$ that 
  $q$-represents $\calF$. 
\end{corollary}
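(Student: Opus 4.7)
The plan is to obtain $\calF'$ as a minimal $q$-representative subfamily of $\calF$ and then apply Lemma~\ref{lem:Bollobas}. Since $\calF$ trivially $q$-represents itself, we can repeatedly remove elements so long as the resulting subfamily remains $q$-representative; this produces a subfamily $\calF' = \{A_1, \dots, A_m\}$ such that for each $i$, the set $\calF' \setminus \{A_i\}$ is no longer $q$-representative of $\calF$.

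Next, I would extract the witness sets required by Bollobás directly from this minimality. For each $i$, since $\calF' \setminus \{A_i\}$ fails to $q$-represent $\calF$, there is a set $B_i$ of size at most $q$ that is disjoint from some element of $\calF$ but intersects every element of $\calF' \setminus \{A_i\}$. Since $\calF'$ itself is $q$-representative, some element of $\calF'$ must be disjoint from $B_i$, and $A_i$ is the only possibility. Thus $A_i \cap B_i = \emptyset$ while $A_j \cap B_i \neq \emptyset$ for all $j \neq i$, which is precisely the hypothesis of Lemma~\ref{lem:Bollobas} applied to the sequences $A_1, \dots, A_m$ and $B_1, \dots, B_m$.

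Applying Bollobás then yields
\[
\sum_{i=1}^m \binom{|A_i| + |B_i|}{|B_i|}^{-1} \le 1.
\]
Since $|A_i| \le p$ and $|B_i| \le q$, and since $\binom{a+b}{b}$ is non-decreasing in each of $a$ and $b$ (because $\binom{a+1+b}{b}/\binom{a+b}{b} = (a+b+1)/(a+1) \ge 1$ and $\binom{a+b+1}{b+1}/\binom{a+b}{b} = (a+b+1)/(b+1) \ge 1$), each term in the sum is at least $\binom{p+q}{q}^{-1}$. Therefore $m \cdot \binom{p+q}{q}^{-1} \le 1$, giving $|\calF'| = m \le \binom{p+q}{p}$. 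I do not anticipate any real obstacle here: this is the classical use of Bollobás to bound representative families, and the only slightly delicate point is the monotonicity verification, which is routine.
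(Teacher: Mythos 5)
Your proof is correct and follows essentially the same approach as the paper's: take a minimal $q$-representative subfamily, extract the witness sets $B_i$ from minimality, and apply Lemma~\ref{lem:Bollobas} together with the monotonicity of binomial coefficients. You have merely spelled out in a bit more detail two steps that the paper leaves terse (why minimality yields the $B_i$ with the required intersection pattern, and why $\binom{a_i+b_i}{b_i}\le\binom{p+q}{q}$), which is fine.
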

\begin{proof}
Consider a minimal subfamily $\calF'$ of $\calF$ which
$q$-represents $\calF$.
Clearly, $\calF'$ is well-defined as $\calF$ does $q$-represent itself.
We enumerate the elements of $\calF'$ as $A_1,\ldots,A_m$ where $m=|\calF'|$.
Since $\calF'$ is minimal, for every set $A_i$ there exists a set $B_i$ of size at most \(q\) such that
 $A_i\cap B_i=\emptyset$ and $A_j\cap B_i\neq\emptyset$ for $j\neq i$.
Therefore, the sequences \(A_1, \dots, A_m\) and \(B_1, \dots, B_m\)
satisfy the assumptions of Lemma~\ref{lem:Bollobas} and we conclude that $\sum_{i=1}^m{\binom{a_i + b_i}{b_i}}^{-1} \le 1$, where \(a_i = |A_i|\) and \(b_i = |B_i|\). Since \(a_i \le p\) and \(b_i \le q\),
we have \(\binom{a_i + b_i}{b_i} \le \binom{p+q}{q}\).
Therefore \(m \cdot {\binom{p+q}{q}}^{-1} \le 1\), so \(|\calF| = m \le \binom{p+q}{q}\).
\end{proof}

\begin{proof}[Proof of Lemma~\ref{lem:new-abs}]
It suffices to prove the lemma for sufficiently small values of \(\epsi\), so let
us assume \(\epsi < 1/2\).
Let \(\epsi'\) be a positive real satisfying
\[\epsi' < \frac{\epsi}{2 + (d+1)\epsi}.\]
Let \(\calC'\) be the closure of \(\calC\) under taking subgraphs.
Since \(\calC\) is nowhere dense, so is \(\calC'\). 
Let \(n_0'\) be the constant obtained by an application of Lemma~\ref{lem:weak-abs} to \(\calC'\), \(d\), and \(\epsi'\).

Let \(k_1\) be an integer such that for every \(k \ge k_1\) we have 
\[
k + k^2(d-1)\binom{\floor{k^{\epsi}}+d+1}{d+1} \le {(k^{\epsi}/2)}^{1/\epsi'}.\] 
We can find such integer because the left-hand side of the inequality is \(\Oh(k^{2+(d+1)\epsi})\), the right-hand side is \(\Omega(k^{\epsi/\epsi'})\) and \(2+(d+1)\epsi < \epsi/\epsi'\).
Moreover, as \(\epsi < 1/2\), we can choose an integer \(k_2\) such that for
every \(k \ge k_2\) we have
\[k^{1-\epsi} - 2 \ge k^{\epsi}.\]
Let
\[k_0 = \max\left\{n_0', k_1, \ceil*{{(\epsi - \epsi')}^{-1/\epsi}}, k_2\right\}.\]
Let \(G\) be a graph in  \(\calC\) and let \(A\) and \(B\) be two vertex subsets in \(G\) with \(|A \cup B| \ge k_0\). We aim to show the existence of sets \(A'\), \(B'\), and \(S\) as in the statement of the lemma.

Let \[k=|A \cup B|,\qquad p = d + 1,\qquad\textrm{and}\qquad q = \floor{{k}^{\epsi}}.\]
For \(a \in A\) and \(b \in B\),
consider all paths in \(G\) of length at most \(d\) which have one endpoint in \(a\) and
second in \(b\), and 
let \(\calF_{a, b}\) denote the family comprising the vertex sets of such paths.
Let $\calF'_{a,b}$ be a subfamily of \(\calF_{a, b}\) that $q$-represents \(\calF_{a, b}\).
By Corollary~\ref{cor-representative-sets}, we can choose $\calF'_{a,b}$ so that  \(|\calF'_{a,b}|\leq\binom{p+q}{p}\).
Let \(G'\) be the subgraph of \(G\) induced by the vertices in the set
\[
A \cup B\cup
\bigcup_{\substack{a\in A, b\in B\\X\in\calF'_{a, b}}}X.
\]
Since each set in a family \(\calF'_{a, b}\) has at most \(d-1\) vertices outside \(A \cup B\), we have
\begin{align*}
|V(G')| &= |A \cup B| + \sum_{a \in A, b \in B} \sum_{X \in \calF'_{a, b}} |X \setminus (A \cup B)|\\
&\le k + k^2(d-1)\binom{\floor{k^{\epsi}} + d + 1}{d+1} \\
&\le {(k^{\epsi}/2)}^{1/\epsi'}&&\text{as } k \ge k_0 \ge k_1.
\end{align*}
Thus \(|V(G')|^{\epsi'} \le k^{\epsi}/2\).

Since \(G'\in\calC'\) and \(|V(G')| \ge |A \cup B| = k \ge k_0 \ge n_0'\),
by Lemma~\ref{lem:weak-abs} we can obtain sets
\(A'\), \(B'\), and \(S\) with \(A' \subseteq A\), \(B' \subseteq B\), and \(S \subseteq V(G')\)
such that \(|A'| \ge (1-\epsi')|A|\), \(|B'| \ge |B|/|V(G')|^{\epsi'} \ge 2|B|/k^{\epsi}\), \(|S| \le |V(G')|^{\epsi'} \le k^{\epsi}/2 \le k^{\epsi}\), and
\(S\) \(d\)-separates \(A'\) and \(B'\) in \(G'\).
We claim that \(S\) \(d\)-separates \(A'\) and \(B'\) in the graph \(G\) as well.
Suppose to the contrary that in \(G\) there is a path
\(P\) of length at most \(d\) with one endpoint in \(A'\) and second in \(B'\)
such that \(P\) does not contain any vertex from \(S\).
Let \(a\) denote the endpoint of \(P\) in \(A'\) and let \(b\) denote
the endpoint of \(P\) in \(B'\).
The set \(V(P)\) is a member of \(\calF_{a, b}\) that is disjoint from the set \(S\). Since \(|S|\leq q\) and
 \(\calF'_{a, b}\) \(q\)-represents the family \(\calF_{a, b}\), there exists a set \(X \in \calF'_{a, b}\) which is disjoint from
\(S\).
Then the set \(X\) is a vertex set of a path of length at most \(d\) connecting \(A'\) and \(B'\) in \(G'\) which is disjoint from \(S\), a contradiction.

It remains to argue that we can modify the sets \(A'\) and \(B'\) by removing some elements so that they
become disjoint and still remain sufficiently large.
Note that currently all common elements of \(A'\) and \(B'\) must lie in \(S\) because \(S\) \(d\)-separates \(A'\) and \(B'\).
We aim to remove every element of \(A' \cap B'\) from either \(A'\) or \(B'\).
Note that \(|A'| \ge (1-\epsi')|A| = (1 - \epsi)|A| + (\epsi - \epsi')|A|\),
so we have a surplus of at least \(\floor*{(\epsi - \epsi')|A|}\) vertices in \(A'\) which we
can remove without violating the \(|A'| \ge (1-\epsi)|A|\) requirement.
We also have \(|B'| \ge 2|B|/k^{\epsi} = {|B|/k^{\epsi}} + |B|/k^{\epsi}\),
so we can remove at least \(\floor*{|B|/k^{\epsi}}\) vertices from \(B'\).
Thus, the total number of vertices we can remove from sets \(A'\) and \(B'\) is
at least \((\epsi - \epsi')|A| - 1 + |B|/k^{\epsi} - 1\), and
\begin{align*}
(\epsi - \epsi')|A| - 1 + |B|/k^{\epsi} - 1 &\ge |A|/k^{\epsi} - 1 + |B|/k^{\epsi}-1&&\text{as }k \ge k_0 \ge {(\epsi - \epsi')}^{-1/\epsi}\\
& \ge |A \cup B|/k^\epsi - 2\\
& = k^{1-\epsi} - 2\\
& \ge k^{\epsi}&&\text{as }k \ge k_0 \ge k_2\\
& \ge |S|.
\end{align*}
Therefore, after removing each element of \(A' \cap B'\) from either \(A'\) or \(B'\),
the sets \(A'\) and \(B'\) satisfy properties requested in the lemma statement.
\end{proof}

\section{Proof of Theorem~\ref{thm:main}}
\label{sec:proofs-of-the-theorem}

Let $G$ be a graph, $S$ be a set of vertices in \(G\), and let \(d\) be a positive integer.
For a vertex $u$ in $G$, the \emph{distance-$d$ profile} of $u$ on $S$ in $G$ is defined as a function $\profile^d_G[u, S]$ from $S$ to $\set{0, \dots, d, \infty}$ given by
\[
\profile^d_G[u, S](x) =
\begin{cases}
  \dist_G(u, x) &\text{if $\dist_G(u, x) \leq d$,}\\
  \infty &\text{otherwise.}
\end{cases}
\] 
The \emph{distance-$d$ profile complexity} of $S$ in $G$ is the number of different distance-$d$ profiles on $S$ in~$G$.
We will use the following result of Eickmeyer et al.~\cite{EickmeyerGKKPRS17}.

\begin{theorem}[\cite{EickmeyerGKKPRS17}]\label{thm:distance-profile-complexity}
  Let $\calC$ be a nowhere dense class of graphs, \(d\) be a positive integer, 
  and $\epsi$ be a positive real.
  Then there is an integer \(m_0\)
  such that
  for every graph $G$ in $\calC$ and every subset $S$ of $V(G)$ with \(|S| \ge m_0\),
  the distance-\(d\) profile complexity of \(S\) in $G$ is at most $|S|^{1+\epsi}$.
\end{theorem}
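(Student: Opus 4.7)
The plan is to prove Theorem~\ref{thm:distance-profile-complexity} by combining the characterization of nowhere denseness via weak coloring numbers with a Bollob\'as-style counting argument reminiscent of the representative-sets technique used in Section~\ref{sec:lemmas}. First, I would fix a linear ordering $\sigma$ of $V(G)$ minimizing $\wcol_{2d}$, so that by nowhere denseness $\wcol_{2d}(G, \sigma) \le n^{\epsi'}$ for arbitrarily small $\epsi' > 0$ (where $n = |V(G)|$). The key structural observation is this: for any vertex $u$ and any $s \in S$ with $\dist_G(u, s) = k \le d$, if $P$ is a shortest $u$--$s$ walk and $v$ is the $\sigma$-minimum vertex on $P$, then $v$ lies in $\WReach_d[G, \sigma, u] \cap \WReach_d[G, \sigma, s]$ and the two halves of $P$ witness $\dist_G(u, v) + \dist_G(v, s) = k$. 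Consequently, the profile $\profile^d_G[u, S]$ is completely determined by the decorated signature
\[\tau_u = \set{(v, \dist_G(u, v)) : v \in \WReach_d[G, \sigma, u]}\]
together with the fixed analogous signatures $\tau_s$ for $s \in S$, via
\[\profile^d_G[u, S](s) = \min\set{\dist_G(u, v) + \dist_G(v, s) : v \in \WReach_d[G, \sigma, u] \cap \WReach_d[G, \sigma, s]},\]
with the minimum interpreted as $\infty$ whenever it exceeds $d$.

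The counting step then reduces to bounding the number of distinct restrictions of $\tau_u$ to the \emph{interface} $W := \bigcup_{s \in S} \WReach_d[G, \sigma, s]$, which has size at most $|S|\cdot \wcol_d(G, \sigma) \le |S|\cdot n^{\epsi'}$. I would encode these restricted signatures as decorated set systems on $W$, each of weight at most $\wcol_d(G, \sigma)$, and bound the number of distinct such systems using the sparsity of shallow minors of $G$, in the spirit of Lemma~\ref{lem:Bollobas}. The target bound is of the form $|W|^{1 + O(\epsi')}$, which for an appropriate choice of $\epsi'$ is at most $|S|^{1+\epsi}$ once $|S|$ is sufficiently large relative to $m_0$.

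The hard part will be the quantitative step that turns the set-system bound into the specific polynomial $|S|^{1+\epsi}$: a naive estimate on the number of decorated $\wcol_d(G, \sigma)$-subsets of $W$ yields something like $|S|^{n^{\epsi'}}$, which is vastly too large. To bring the exponent down to $1 + \epsi$, one must exploit that nowhere dense classes exclude every $K_{t, t}$ as a subgraph (for $t$ depending on the class), which, combined with the weak coloring bounds at every depth, produces Zarankiewicz-type bounds on the bipartite incidence between vertices and their weakly reachable sets. Careful bookkeeping of the interplay between $\epsi$, $\epsi'$, the threshold $m_0$, and the implicit nowhere denseness bounds is the main challenge; once navigated, the estimate should go through in the same spirit as the arguments in Section~\ref{sec:lemmas}.
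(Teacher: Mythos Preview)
Note first that the paper does not prove Theorem~\ref{thm:distance-profile-complexity}: it is quoted from~\cite{EickmeyerGKKPRS17} and used as a black box, so there is no in-paper proof to compare against.

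Your structural observation is correct and is indeed the starting point in~\cite{EickmeyerGKKPRS17}: the distance-$d$ profile $\profile^d_G[u,S]$ is determined by the decorated set $\{(v,\dist_G(u,v)) : v \in \WReach_d[G,\sigma,u]\cap W\}$, with $W=\bigcup_{s\in S}\WReach_d[G,\sigma,s]$.

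The gap is in the counting step, and it is qualitative rather than mere bookkeeping. Your weak-coloring bound reads $\wcol_{2d}(G,\sigma)\le n^{\epsi'}$ with $n=|V(G)|$, so $|W|\le |S|\cdot n^{\epsi'}$; any estimate of the shape $|W|^{1+O(\epsi')}$ therefore carries a stray power of $n$, whereas the theorem demands a bound in $|S|$ alone, uniformly over all $G\in\calC$. Taking $\epsi'$ small and $|S|\ge m_0$ cannot help, because $m_0$ is a constant independent of $G$ while $n$ may be arbitrarily large compared to $|S|$. The proposed fix via $K_{t,t}$-freeness does not close this either: Zarankiewicz bounds edge counts, and passing through Sauer--Shelah (bounded VC-dimension) yields at best $O(|S|^{t})$ distinct neighborhoods for a \emph{fixed} constant $t$ depending on the class and on $d$---both weaker than $|S|^{1+\epsi}$ for every $\epsi>0$, and still not eliminating the $n$-dependence hidden in the signature sizes $\wcol_d(G,\sigma)$.

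The actual argument in~\cite{EickmeyerGKKPRS17} avoids any global dependence on $n$ by a recursion on the radius: the number of distinct projection profiles at radius $d$ is controlled via profile complexity at smaller radii over an enlarged set that remains polynomial in $|S|$. Some such inductive mechanism---or, alternatively, a localization to a subgraph whose order is polynomial in $|S|$, in the spirit of the passage from Lemma~\ref{lem:weak-abs} to Lemma~\ref{lem:new-abs}---is the missing ingredient in your plan.
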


With all the tools prepared, we may proceed to the proof of our main result.

\begin{proof}[Proof of Theorem \ref{thm:main}]
  It is enough to prove the theorem for sufficiently small \(\epsi\), so
  we assume that \(\epsi < 1\).
  Let \(\epsi' = \epsi / 3\).
  Let \(k_0'\) be the integer obtained by
  applying Lemma~\ref{lem:new-abs} to \(\calC\), \(d\), and \(\epsi'\).
  Let \(m_0\) be the integer obtained by applying Theorem~\ref{thm:distance-profile-complexity}
  to \(\calC\), \(d\) and \(\epsi=1\), so that the distance-\(d\) profile complexity of
  any \(m\)-element vertex subset in a graph from \(\calC\) is at most \(m^2\), provided \(m \ge m_0\).
  We prove the theorem for \(k_0\) defined as
  \[k_0 = \max \left\{k_0', \ceil*{(m_0 + 1)^{1/\epsi'}}\right\}.\]
  
  Let \(G\) be a graph in \(\calC\) and
  let \(A\) and \(B\) be two nonempty sets of vertices in \(G\) satisfying \(|A \cup B| \geq k_0\).
  Let \(k=|A \cup B|\).
  Since \(k\geq k_0 \ge k_0'\),
  there exist disjoint subsets \(A_0\) and \(B_0\)
  of \(A\) and \(B\) respectively,
  and there is a vertex subset \(S\) in \(G\) such
  that \(|A_0| \ge (1-\epsi')|A| \ge (1-\epsi)|A|\), \(|B_0| \ge |B|/k^{\epsi'}\),
  \(|S| \le k^{\epsi'}\), and every \(A_0\)--\(B_0\)-path of length at most
  \(d\) in \(G\) intersects \(S\).
  By adding arbitrary vertices of $G$ if necessary, we may assume that
  $k^{\epsi'}\geq |S| > k^{\epsi'}-1$. This is possible since $\epsi'\le 1$ and $|V(G)|\geq |A\cup B|= k$.
  Therefore,
  \[
  k^{\epsi'} \geq |S| > k^{\epsi'}-1 \ge k_0^{\epsi'}-1 \ge (m_0 + 1) - 1 = m_0.
  \]
  We infer that the distance-$d$ profile complexity of \(S\) in \(G\) is at most \(|S|^2\).
  Therefore, there is a subset \(B'\) of
  \(B_0\) in which all vertices have the same distance-\(d\) profile on \(S\)
  and
  \[
  |B'| \ge |B_0|/|S|^2 \geq |B_0|/k^{2\epsi'} \ge |B|/k^{3\epsi'} = |B|/k^{\epsi}.
  \]

  To prove that the sets \(A_0\) and \(B'\) satisfy the theorem, it suffices to
  show that all elements of \(B'\) have the same distance-\(d\) profile on \(A_0\),
  that is for any \(a \in A_0\) and
  \(b_1, b_2 \in B'\),  if \(\dist_G(b_1, a) \le d\) then
  \(\dist_G(b_2, a) = \dist_G(b_1, a)\).
  Suppose then, that \(\dist_G(b_1, a) \le d\).
  A shortest \(a\)--\(b_1\) path intersects
  \(S\) in a vertex \(s\) such that
  \(\dist_G(b_1, a) = \dist_G(b_1, s) + \dist_G(s, a)\).
  Since \(\profile^d_G[b_1, S] = \profile^d_G[b_2, S]\),
  we have
  \[\dist_G(b_2, a) \le \dist_G(b_2, s) + \dist_G(s, a) =
  \dist_G(b_1, s) + \dist_G(s, a) = \dist_G(b_1,a) \le d.\]
  A symmetric argument shows that \(\dist_G(a, b_1) \le \dist_G(a, b_2)\), so
  indeed \(\dist_G(a, b_1) = \dist_G(a, b_2)\).
  This completes the proof of the theorem.
\end{proof}

\newcommand{\tup}[1]{\bar{#1}}
\newcommand{\adj}{\mathsf{adj}}

\section{From powers to interpretations}\label{sec:interpretations}

We now use the results obtained in the previous sections to prove a generalization to the setting of classes obtained from nowhere dense ones using (one-dimensional) First Order interpretations. 
That is, we prove Theorem~\ref{thm:interpretations}.
Note that the statement of this result does not involve the notion of $\FO$ interpretations explicitly, but a reader familiar with this concept should see a connection.
We refer to the work of Gajarsk\'y et al.~\cite{GajarskyKNMPST18} for a broader discussion of $\FO$ interpretations and their relation with the notions of sparsity.

\medskip

We assume basic familiarity with the First Order logic ($\FO$) on undirected graphs. 
Recall that in this logic, variables correspond to single vertices and formulas are built recursively from smaller subformulas using the following constructs: existential and universal quantification, and standard boolean
connectives. The basic ({\em{atomic}}) formulas are of the form $x=y$ and $\adj(x,y)$; they respectively check whether vertices assigned to variables $x$ and $y$ are equal or adjacent.
For instance, for a nonnegative integer $d$, the following inductive definition gives a formula $\delta_d(x,y)$ such that $\delta_d(u,v)$ is true in a graph $G$ if and only if \(\dist_G(u, v) \le d\).
\begin{eqnarray*}
 \delta_0(x,y) & \coloneqq & (x=y)\\
 \delta_d(x,y) & \coloneqq & \delta_{d-1}(x,y)\vee \\
 & & \left[\exists_{z_1}\ldots\exists_{z_{d-1}}\ \adj(x,z_1)\wedge \adj(z_1,z_2)\wedge\ldots \wedge \adj(z_{d-2},z_{d-1})\wedge \adj(z_{d-1},y)\right].
\end{eqnarray*}
If $\varphi(\tup x)$ is a formula with a tuple of free variables $\tup x$ and $\tup u\colon \tup x\to V(G)$ is an evaluation of variables of $\tup x$ in a graph $G$, then we write $G\models \varphi(\tup u)$ to
denote that $\varphi$ holds in $G$ when each variable of $\tup x$ is evaluated as prescribed by $\tup u$.

The main tool that will replace results on neighborhood complexity (Theorem~\ref{thm:distance-profile-complexity}) are the bounds on the number of {\em{$\FO$-types}} in nowhere dense classes, 
which were recently proved by Pilipczuk et al.~\cite{PilipczukST18a}.
We refer to this work for a broader discussion, while we will rely on the following corollary, which follows easily from~\cite[Theorem~1.3 and Lemma~3.1]{PilipczukST18a}.

\begin{lemma}[\cite{PilipczukST18a}]\label{lem:types}
Let $\Cc$ be a nowhere dense class of graphs and $\varphi(x,y)$ be an $\FO$ formula with two free variables. 
Then there exist positive integers $c$ and $d$ such that for every graph $G$ in $\Cc$ and vertex subset $S\subseteq V(G)$, there exists a coloring $\lambda$ of vertices of $G$ with at most $c\cdot |S|^c$ colors such that the following holds:
for every triple of vertices $u,v,v'\in V(G)$ such that $\lambda(v)=\lambda(v')$ and $S$ intersects every path of length at most $d$ connecting $u$ with any of $\{v,v'\}$, we have
$$G\models \varphi(u,v)\qquad\textrm{if and only if}\qquad G\models \varphi(u,v').$$
\end{lemma}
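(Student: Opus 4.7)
The plan is to reduce Lemma~\ref{lem:types} to the type-counting theorem of Pilipczuk, Siebertz and Toruńczyk~\cite{PilipczukST18a} by a standard Gaifman locality argument. I would first apply Gaifman's locality theorem to $\varphi(x,y)$, obtaining an equivalent formula that is a Boolean combination of $r$-local formulas $\psi_1(x,y),\dots,\psi_m(x,y)$ and basic local sentences, for some radius $r$ depending only on $\varphi$. In the fixed graph $G$ the basic local sentences evaluate to constants and therefore play no role in distinguishing pairs $(u,v)$. Hence it suffices to construct, for each $r$-local $\psi_i$ separately, a coloring $\lambda_i$ with at most $c\cdot|S|^c$ colors satisfying the conclusion of the lemma for $\psi_i$ in place of $\varphi$; the product coloring $\lambda(v)=(\lambda_1(v),\dots,\lambda_m(v))$ then handles the Boolean combination and increases the number of colors only by a multiplicative constant with an adjusted exponent. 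I would set $d=2r$.

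Fix one $r$-local $\psi(x,y)$. The geometric heart of the argument is the observation that if $S$ intersects every path of length at most $d=2r$ from $u$ to $v$, then $N_G^r[u]\cap N_G^r[v]\subseteq S$, since any common vertex outside $S$ would yield such a path. Because $\psi$ is $r$-local, the value $\psi(u,v)$ depends only on the induced substructure $G[N_G^r[\{u,v\}]]$ with $u$ and $v$ marked, and this substructure decomposes into the two $r$-balls glued along their $S$-intersection. Thus if we define the type $\tau(v)$ of a vertex $v$ to be the isomorphism class of the structure $(G[N_G^r[v]],v)$ with the vertices of $S\cap N_G^r[v]$ marked by their identities in $S$, then $\tau(v)=\tau(v')$ forces $\psi(u,v)\Leftrightarrow \psi(u,v')$ for every $u$ as in the hypothesis: the isomorphism witnessing $\tau(v)=\tau(v')$ fixes $S$ pointwise and can therefore be glued with the identity on the $u$-side to yield an isomorphism between $(G[N_G^r[\{u,v\}]],u,v)$ and $(G[N_G^r[\{u,v'\}]],u,v')$, which $r$-locality of $\psi$ then transfers to equivalence of the two formula evaluations.

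To bound the number of types by $c\cdot|S|^c$, note that $\tau(v)$ is determined by data of the form ``for each tuple $\bar s\in S^{\ell}$, is $G\models \chi(v,\bar s)$?'', where $\chi$ ranges over a finite list of FO formulas encoding the isomorphism class of the $r$-neighborhood of $v$ together with the prescribed marking of a sub-tuple of $\bar s$ as the $S$-elements of that neighborhood, and $\ell$ is a bound on the size of $N_G^r[v]$'s intersection with $S$ that must be encoded (equivalently, one takes a finite list of $\chi$'s of various arities up to some bound depending on the isomorphism type encoded). Hence $\tau$ takes at most as many distinct values as there are $\chi$-types over $S$ realized by vertices of $G$, summed over this finite list. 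Theorem~1.3 of~\cite{PilipczukST18a}, combined with the reduction of their Lemma~3.1, bounds the number of such FO types in any graph from a nowhere dense class by $c\cdot|S|^c$ for a constant $c$ depending on $\Cc$ and $\chi$. The main obstacle is precisely this polynomial bound on FO types over $S$: it is the genuine contribution of~\cite{PilipczukST18a} and rests on uniform quasi-wideness together with careful VC-dimension and compactness arguments; the Gaifman-based reduction described above is the routine packaging that converts their bound into the coloring asserted by Lemma~\ref{lem:types}.
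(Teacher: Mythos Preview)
The paper does not supply a proof of this lemma; it simply records it as a consequence of Theorem~1.3 and Lemma~3.1 of~\cite{PilipczukST18a}. Your high-level plan---Gaifman locality together with the polynomial bound on $\FO$ types over a parameter set---is indeed the intended derivation.

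However, the step you call the ``geometric heart'' is wrong as stated. The claim that if $S$ meets every $u$--$v$ path of length at most $2r$ then $N_G^r[u]\cap N_G^r[v]\subseteq S$ is false: take $G$ to be a path on four vertices $u,s,w,v$ with $S=\{s\}$ and $r=2$; the unique $u$--$v$ path (of length $3$) hits $S$, yet $w\in N^2[u]\cap N^2[v]\setminus S$. Consequently your gluing argument breaks down: an isomorphism $N^r[v]\to N^r[v']$ fixing $S$ pointwise has no reason to agree with the identity on a vertex such as $w$ lying in the overlap outside $S$, so the two partial maps cannot simply be pasted into an isomorphism between $N^r[\{u,v\}]$ and $N^r[\{u,v'\}]$. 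What $d$-separation actually gives you is that the $r$-balls around $u$ and $v$ are disjoint in $G-S$; turning this into a statement about $r$-local formulas evaluated in $G$ is exactly the nontrivial content of Lemma~3.1 in~\cite{PilipczukST18a}, and it is not achieved by naive ball-gluing. There is also a second, independent problem: your coloring $\tau(v)$ records the full isomorphism type of the $r$-ball with the $S$-vertices named, which in a nowhere dense class (where $r$-balls have unbounded size) takes far more than $c\cdot|S|^c$ values and cannot be encoded by any finite list of $\FO$ formulas of bounded arity and quantifier rank. The coloring that Theorem~1.3 of~\cite{PilipczukST18a} actually bounds is the $\FO$ $q$-type of $v$ over $S$ for a suitable $q$ depending on $\varphi$; that is what $\lambda$ should be.
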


\medskip 

We are now ready to prove Theorem~\ref{thm:interpretations}.

\begin{proof}[Proof of Theorem~\ref{thm:interpretations}]
 Let $c$ and $d$ be the constants given by Lemma~\ref{lem:types} for the class $\Cc$ and formula $\varphi(x,y)$. Let $k=|A\cup B|$ and let $\eps'$ be a positive real, to be determined later.
 Assuming that $k$ is large enough depending on $\Cc$, $d$, and $\eps'$, we may apply Lemma~\ref{lem:new-abs} to $A$ and $B$ and constants $d$ and~$\eps'$. 
 Thus, we find disjoint subsets $A_0$ and $B_0$ of $A$ and $B$, respectively, and a vertex subset $S$ such that
 \begin{itemize}
  \item $|A_0|\geq (1-\eps')|A|$, $|B_0|\geq |B|/k^{\eps'}$, and $|S|\leq k^{\eps'}$; and
  \item for every $a\in A_0$ and $b\in B_0$, every path of length at most $d$ connecting $a$ and $b$ intersects~$S$.
 \end{itemize}
 Now let $\lambda$ be the coloring of vertices of $G$ provided by Lemma~\ref{lem:types}.
 Then $\lambda$ uses at most $c\cdot |S|^c\leq c\cdot k^{c\eps'}$ colors, hence there is a subset $B'\subseteq B_0$ such that $|B'|\geq \frac{|B|}{c\cdot k^{(c+1)\eps'} }$ and $B'$ is monochromatic in $\lambda$.
 By Lemma~\ref{lem:types} and the second point above, for every $a\in A_0$ we either have $G\models \varphi(a,b)$ for all $b\in B'$, or $G\models \neg \varphi(a,b)$ for all $b\in B'$.
 Hence, we may set $A'=A_0$.
 It remains to note that by choosing $k_0$ and $\eps'<\frac{\eps}{c+1}$ appropriately, we have $|A'|\geq \left(1-\eps\right) |A|$ and $|B'|\geq |B|/k^{\eps}$.
\end{proof}

\section{Negative example}
\label{sec:example}
    \begin{proof}[Proof of Proposition~\ref{prop:no-strong-EH}]
        Consider the \emph{Erd\H{o}s-R\'enyi random graph model} --- for fixed \(n \in \mathbb{N}\) and \(p \in [0,1]\) it is the distribution \(\mathcal{G}_{n,p}\) on \(n\)-vertex graphs, in which every edge is present with probability \(p\) independently from other edges. We wish to use this distribution to obtain our desired family \(\mathcal{C}\). Let us fix an integer \(g \geq 3\) and consider the distribution \(\mathcal{G}_{n,p}\) for some large value of \(n\) determined later and \(p = n^{\frac{1}{2g}-1}\).
        Denote by \(X\) the random variable counting the number of cycles of length at most \(g\) in a graph drawn from \(\mathcal{G}_{n,p}\). 
        We easily obtain, for \(n\) large enough, the following bound on the expected value of $X$:
        \[
            \mathbb{E}(X) = \sum_{k = 3}^{g} {n \choose k} k! \frac{p^k}{2k} \leq \sum_{k = 3}^{g} \frac{(np)^k}{2k} \leq (np)^g = \sqrt{n}.
        \]
        By Markov's inequality, the probability that \(X \geq 3 \sqrt{n}\) is at most \(1/3\).
        Let us the following event \(A\): a graph drawn from \(\mathcal{G}_{n,p}\) contains a pair of disjoint vertex subsets which are either complete or anticomplete to each other, and are of size at least \(n / g\).
        Using the union bound we obtain the following upper bound on the probability of $A$:
        \begin{align*}
            \mathbb{P}(A) \leq\ & 2^{n} \cdot 2^n  \cdot \left( p^{(n/g)^2} + (1-p)^{(n/g)^2} \right) = 2^{2n} \cdot n^{\left(\frac{1}{2g}-1\right)(n/g)^2} + 2^{2n} \cdot \left(1 - \frac{1}{n^{1 - \frac{1}{2g}}}\right)^{(n/g)^2}. 
        \end{align*}
        Both of the summands on the right hand side tend to \(0\) as \(n \to \infty\). So let us assume that \(n\) is large enough so that \(\mathbb{P}(A)<1/3\).

        Therefore, for \(n\) large enough, the probability that a graph drawn from \(\mathcal{G}_{n,p}\) has less than \(3 \sqrt{n}\) cycles of length at most \(g\) 
        and contains no pair of complete/anticomplete vertex subsets of size \(n / g\) is positive. In particular, a graph enjoying both these properties exists. Let us call it \(H_g\).

        Now consider every cycle of length at most \(g\) in \(H_g\), and let us call \(G_g\) the graph obtained from \(H_g\) by deleting one vertex from each such cycle. As there are less than \(3 \sqrt{n}\) of these cycles, $G_g$ has at least \(n/2\) vertices. Moreover, deleting vertices cannot create a pair of complete or anticomplete sets of a given size where none existed.
        Now we can define the following family of graphs
        \[
            \mathcal{C} = \set*{\,G_g \ \colon \ g \in \mathbb{N},\, g \geq 3\,}.
        \]
        We argue that (1) \(\mathcal{C}\) is nowhere dense, and 
        (2) \(\mathcal{C}\) does not satisfy the strong Erd\H{o}s--Hajnal property.

        Since the sizes of graphs in \(\mathcal{C}\) are bounded by a function of their girth, $\mathcal{C}$ is indeed nowhere dense. By the construction, \(G_g\) does not contain a complete/anticomplete pair of sets of size  \(\frac{n}{g} \leq \frac{2|V(G_g)|}{g}\), so the strong Erd\H{o}s--Hajnal property does not hold.
    \end{proof}

 \bibliographystyle{plain}
 \bibliography{sparsity}

\appendix

\section{Low shrubdepth colorings and the strong Erd\H{o}s-Hajnal property}\label{app:shrubs}

We start by considering the setting when the graph in question admits a connection model of bounded depth and using a bounded number of labels. Here, by the {\em{depth}} of a rooted tree we mean the maximum number of vertices on a root-to-leaf path.

\begin{lemma}\label{lem:low-shrubdepth}
 Let $G$ be a graph on $n\geq 2$ vertices that admits a connection model of depth $d$ and using a label set of size $s$.
 Then there exist disjoint subsets of vertices $A$, $B$ of $G$ such that $$\min(|A|,|B|)\geq \frac{n}{3s\cdot 2^{d-2}}$$ and $A$ is either complete or anticomplete to $B$.
\end{lemma}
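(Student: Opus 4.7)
The plan is to proceed by induction on the depth $d$ of the connection model $(\lambda, T, \{Z_x\})$. We may assume $n > 3s\cdot 2^{d-2}$, since otherwise the claimed bound drops below~$1$ and is witnessed by any two singleton vertex sets. In the base case $d = 2$, the root is the only non-leaf of $T$, so the adjacency of any two vertices of $G$ depends solely on $Z_{\mathrm{root}}(\lambda(u), \lambda(v))$. By pigeonhole some label class $C$ has size at least $n/s$; whether $(a,a) \in Z_{\mathrm{root}}$ or not, $C$ induces either a clique or an independent set, and splitting $C$ into two halves gives disjoint $A, B$ each of size at least $n/(2s) \geq n/(3s)$, matching the target since $2^{d-2} = 1$.

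For $d \geq 3$ I would let $c_1, \dots, c_k$ be the children of the root of $T$, denote by $L_i$ the set of leaves in the subtree $T_i$ rooted at $c_i$, and set $n_i = |L_i|$. If some $n_i \geq n/2$, then $T_i$ together with the restricted labelling and the inherited relations $Z_x$ forms a connection model of depth at most $d-1$ for $G[L_i]$, because the lowest common ancestor in $T$ of any two leaves in $L_i$ already lies in $T_i$. Applying the induction hypothesis to $G[L_i]$ then produces $A, B \subseteq L_i$ with $\min(|A|, |B|) \geq n_i/(3s \cdot 2^{d-3}) \geq n/(3s \cdot 2^{d-2})$, as required.

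The step I expect to require the most care is the remaining case, where every $n_i$ is strictly less than $n/2$. Here the first task is to partition the indices $\{1, \dots, k\}$ into two groups $I_1 \sqcup I_2$ so that $U_j := \bigcup_{i \in I_j} L_i$ satisfies $|U_j| \geq n/3$ for both $j$. Such a partition always exists: either some single $n_i$ lies in $[n/3, n/2)$ and can be placed alone on one side, or else every $n_i < n/3$ and a greedy "add to the smaller pile" procedure produces two piles whose sizes differ by at most $n_{\max} < n/3$, hence both of size at least $n/3$. Once $U_1$ and $U_2$ are fixed, the crucial observation is that for every $u \in U_1$ and $v \in U_2$ the lowest common ancestor in $T$ is the root, so the adjacency between $U_1$ and $U_2$ depends only on labels via $Z_{\mathrm{root}}$.

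A pigeonhole step on labels inside each side then extracts monochromatic subsets $A \subseteq U_1$ with common label $a$ and $B \subseteq U_2$ with common label $b$, each of size at least $|U_j|/s \geq n/(3s) \geq n/(3s \cdot 2^{d-2})$; the pair $(A, B)$ is complete if $(a,b) \in Z_{\mathrm{root}}$ and anticomplete otherwise. Overall, the factor $2^{d-2}$ in the bound is paid entirely by the chain of recursions in the "heavy child" case (each step halves $n$), whereas the "no heavy child" case never recurses and only costs a factor of $s$ in the denominator.
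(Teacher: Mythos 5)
Your proof is correct and follows essentially the same approach as the paper: induction on the depth $d$, with a ``heavy child'' case handled by recursion into the corresponding subtree, and a ``balanced'' case (all children light) handled by a $[n/3,2n/3]$ split at the root followed by a pigeonhole step on labels, the resulting cross-pairs being governed by $Z_{\mathrm{root}}$. The only cosmetic difference is that you treat $d=2$ as a stand-alone base case via a single label class, whereas the paper observes that the balanced case already covers $d=2$ (since no singleton leaf set can exceed $n/2$); the bound and reasoning are otherwise the same.
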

\begin{proof}
 Let \((\lambda, T, \set{Z_x \colon x \in V(T) \setminus V(G)})\) be the assumed connection model of $G$. 
 Recall $T$ has depth $d$ and $Z_x\subseteq \Lambda^2$ for some label set $\Lambda$ of size $s$, for each $x \in V(T) \setminus V(G)$.
 Note that since $n\geq 2$, $T$~cannot consist of only one leaf, hence $d\geq 2$.
 We proceed by induction on $d$, where the base case $d=2$ will be argued directly within the reasoning.
 
 Let $r$ be the root of $T$ and let $C$ be the set of children of $r$ in $T$. 
 For every $c\in C$, let $L_c$ comprise the leaves of $T$ that are descendants of $c$ (possibly $L_c=\{c\}$ if $c$ itself is a leaf).
 Then $\{L_c\colon c\in C\}$ is a partition of the vertex set of $G$. We consider two cases.
 
 First, suppose that there exists $c_0\in C$ such that $|L_{c_0}|>n/2$.
 Note here that this case cannot happen for $d=2$, because then $T$ is a star with all the children of $r$ being leaves, hence $|L_c|=1\leq n/2$ for all $c\in C$.
 Therefore, we may apply the induction hypothesis to the graph $G[L_{c_0}]$, which has $|L_{c_0}|>n/2\geq 1$ vertices and admits a connection model of depth $d-1$ and using a label set of size $s$, obtained by restricting the connection model \((\lambda, T, \set{Z_x \colon x \in V(T) \setminus V(G)})\) in a natural way.
Thus, we obtain $A,B\subseteq L_{c_0}$ such that $A$ is either complete or anticomplete to~$B$, and
 $$\min(|A|,|B|)\geq \frac{|L_{c_0}|}{3s\cdot 2^{d-3}}>\frac{n}{3s\cdot 2^{d-2}},$$
 as required.
 
 Second, suppose that for all $c\in C$, we have $|L_c|\leq n/2$.
 It is well-known that then there is a partition of $C$ into $X$ and $Y$ such that for each $Z\in \{X,Y\}$, we have
 $$n/3\leq \sum_{c\in Z} |L_c|\leq 2n/3.$$
 Since the connection model uses a label set $\Lambda$ of size $s$, we can find a set $A$ such that
  $A\subseteq \bigcup_{c\in X} L_c$, $|A|\geq \frac{n}{3s}$, and all vertices of $A$ receive the same label under $\lambda$.
 We also find $B\subseteq \bigcup_{c\in Y} L_c$ such that
  $B\subseteq \bigcup_{c\in Y} L_c$, $|B|\geq \frac{n}{3s}$, and all vertices of $B$ receive the same label under $\lambda$.
 Thus, $A$ and $B$ are disjoint and $$\min(|A|,|B|)\geq \frac{n}{3s}\geq \frac{n}{3s\cdot 2^{d-2}}.$$
 Finally, let $\lambda_A$ be the value of $\lambda(a)$ for all $a\in A$ 
 and $\lambda_B$ be the value of $\lambda(b)$ for all $b\in B$.
 If $(\lambda_A,\lambda_B) \not \in Z_r$ then $A$ is anticomplete to $B$.
 Otherwise, $(\lambda_A,\lambda_B)  \in Z_r$ and then $A$ is complete to~$B$.
 Note that since in this case we have found suitable sets $A$ and $B$ directly, the base case $d=2$ is covered.
\end{proof}

Now, from Lemma~\ref{lem:low-shrubdepth} it is easy to conclude the claimed connection.

\begin{theorem}\label{thm:lsc-EH}
 Let $\Cc$ be a class of graphs that admits low shrubdepth colorings. Then $\Cc$ has the strong Erd\H{o}s-Hajnal property.
\end{theorem}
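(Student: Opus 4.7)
The plan is to reduce the theorem to Lemma~\ref{lem:low-shrubdepth} via the $p=1$ instance of the low shrubdepth colorings assumption. Applying that definition at $p=1$ yields constants $f \coloneqq f(1)$ and $s \coloneqq s(1)$ such that every graph in $\Cc$ admits a coloring with at most $f$ colors in which each individual color class induces a subgraph of shrubdepth at most $s$.

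Given a graph $G \in \Cc$ with $n \geq 2$ vertices, I would fix such a coloring and let $W$ be a color class of maximum cardinality, so that $|W| \geq n/f$. Since $G[W]$ has shrubdepth at most $s$, it admits a connection model of depth at most $s$ over a label set of size at most $s$. Assuming $|W| \geq 2$, Lemma~\ref{lem:low-shrubdepth} applied to $G[W]$ produces disjoint subsets $A, B \subseteq W$ with
\[
  \min(|A|, |B|) \;\geq\; \frac{|W|}{3s \cdot 2^{s-2}} \;\geq\; \frac{n}{3sf \cdot 2^{s-2}},
\]
such that $A$ is either complete or anticomplete to $B$ in $G[W]$. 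Because $G[W]$ is an induced subgraph of $G$, the edges between $A$ and $B$ in $G[W]$ coincide with those in $G$, so the complete/anticomplete alternative transfers verbatim to $G$ itself.

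The only mild wrinkle is that the pigeonhole bound $|W| \geq n/f$ does not prevent $|W|=1$ when $n \leq f$. To cover this bounded range of $n$, I would simply pick any two distinct vertices $u, v$ of $G$ and take $A = \{u\}$, $B = \{v\}$, which trivially satisfies $\min(|A|,|B|) = 1 \geq n/(2f)$. Setting
\[
  \delta \;\coloneqq\; \min\!\left(\frac{1}{2f},\ \frac{1}{3sf \cdot 2^{s-2}}\right)
\]
then uniformly witnesses the strong Erd\H{o}s--Hajnal property for all $n \geq 2$. I do not anticipate any real obstacle here: the structural content is already packaged in Lemma~\ref{lem:low-shrubdepth}, and the deduction reduces to a one-step pigeonhole on the coloring together with a trivial small-$n$ case analysis.
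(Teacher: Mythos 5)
Your proposal is correct and follows essentially the same route as the paper's proof: take $p=1$, pigeonhole on the color classes to get a large induced subgraph of bounded shrubdepth, apply Lemma~\ref{lem:low-shrubdepth}, and dispose of the small-$n$ regime with disjoint singletons. The only cosmetic difference is that you take $\delta$ to be the minimum of the two bounds, whereas the paper states $\delta = \frac{1}{3sf\cdot 2^{s-2}}$ outright (which also suffices, since a graph on at least two vertices has shrubdepth at least $2$, making $3s\cdot 2^{s-2}\geq 6$); your formulation is marginally more robust but the argument is the same.
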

\begin{proof}
 Consider any graph $G\in \Cc$, say on $n$ vertices. 
 Since we want to verify the strong Erd\H{o}s-Hajnal property of $G$, we assume that $n\geq2$.
 By assumption, there exist numbers $s=s(1)$ and $f=f(1)$ such that $G$ admits a coloring using $f$ colors where each color induces a subgraph of shrubdepth at most $s$.
 By considering the color class of the largest cardinality, we infer that $G$ contains an induced subgraph $H$ such that $H$ has at least $n/f$ vertices and shrubdepth at most~$s$.
 Assuming that $n/f\geq 2$, we may apply Lemma~\ref{lem:low-shrubdepth} to $H$. This yields a pair of disjoint vertex subsets $A$ and $B$ such that $A$ is either complete or anticomplete to $B$, and
 $$\min(|A|,|B|)\geq \frac{|V(H)|}{3s\cdot 2^{s-2}}\geq \frac{n}{3sf\cdot 2^{s-2}}.$$
 Finally, if $n/f<2$, then $n<2f$ and we may choose $A$ and $B$ to be any disjoint singleton sets ($n\geq2$); then $\min(|A|,|B|)>\frac{n}{2f}$.
 We conclude that $\Cc$ has the strong Erd\H{o}s-Hajnal property for $\delta=\frac{1}{3sf\cdot 2^{s-2}}$.
\end{proof}

By combining Theorem~\ref{thm:lsc-EH} with the results of Kwon et al.~\cite{KPS20}, we conclude that for every class of bounded expansion $\Cc$ and positive integer $d$, 
the class $\Cc^d$ has the strong Erd\H{o}s-Hajnal property
Similarly, Conjecture~\ref{conj:lsc}, if true, would imply a weaker variant of Theorem~\ref{thm:main}, where $A=B=V(G)$ and we only require that $\min(|A'|,|B'|)\geq n^{1-\epsi}$.
However, note that in the proof of Theorem~\ref{thm:lsc-EH} we only relied on the existence of a suitable coloring for $p=1$, instead of all positive integers $p$, as postulated by
Conjecture~\ref{conj:lsc}.
While Conjecture~\ref{conj:lsc} remains open in general, its restriction to the case of $p=1$ was very recently proved by Ne\v{s}et\v{r}il et al.~\cite{NOdMPZ19}. 
Therefore, the aforementioned weaker form of Theorem~\ref{thm:main} indeed follows from the results of~\cite{NOdMPZ19} through the reasoning explained above.

\end{document}